\documentclass[11pt]{amsart}
\usepackage{a4wide}
\usepackage[latin1, utf8]{inputenc}

\usepackage{amssymb}
\usepackage{amsmath}
\usepackage{amsthm} 

\usepackage{mathtools} 
\mathtoolsset{showonlyrefs}

\usepackage{graphicx}
\usepackage{caption}
\usepackage{framed}

\usepackage{amsfonts}

\usepackage{hyperref}
\usepackage{xcolor}

\hypersetup{
	colorlinks   = true, 
	urlcolor     = blue, 
	linkcolor    = blue, 
	citecolor   = blue 
}

\numberwithin{equation}{section}
\newtheorem{theorem}{Theorem}[section]
\newtheorem{lemma}[theorem]{Lemma}
\newtheorem{claim}[theorem]{Claim}

\newtheorem{remark}[theorem]{Remark}
\newtheorem{proposition}[theorem]{Proposition}

\newtheorem{corollary}[theorem]{Corollary}

\newcommand{\R}{{\mathbb R}}

\newcommand{\Z}{{\mathbb Z}}
\newcommand{\N}{{\mathbb N}}
\newcommand{\G}{\mathcal{G}}

\allowdisplaybreaks
\DeclareMathOperator*{\Var}{Var\,}

\title[Asymptotic safety regions for Hermite functions]{Asymptotic safety regions for Gabor frames generated by Hermite functions}

\author[Faulhuber]{Markus Faulhuber}
\address{Faculty of Mathematics, University of Vienna \\ Oskar-Morgenstern-Platz 1, 1090 Vienna, Austria}
\email{markus.faulhuber@univie.ac.at}

\author[Shafkulovska]{Irina Shafkulovska}
\address{Faculty of Mathematics, University of Vienna \\ Oskar-Morgenstern-Platz 1, 1090 Vienna, Austria}
\email{irina.shafkulovska@univie.ac.at}

\author[Zlotnikov]{Ilya Zlotnikov}
\address{Department of Mathematical Sciences, Norwegian University of Science and Technology (NTNU), 7491 Trondheim, Norway} 
\address{
Erwin Schrödinger International Institute for Mathematics and Physics (ESI),
University of Vienna, Vienna, Austria
}
\address{ Department of Mathematics, King’s College London, Strand, London, WC2R 2LS, United Kingdom} 
\email{ilia.zlotnikov@kcl.ac.uk}

\keywords{Gabor frame, Hermite function, Laguerre polynomial}
\subjclass[2020]{Primary 42C15; Secondary 33C45, 42C05}

\begin{document}

\begin{abstract}

The aim of this paper is to establish new regions in the frame sets of Hermite functions $h_n$. 
A classical result of Gr\"ochenig and Lyubarskii shows that the Gabor system
$\mathcal{G}(h_n,a\Z\times b\Z)$ forms a frame for $L^2(\R)$ whenever the lattice density exceeds $n+1$. 
We show that, for every $\eta>0$ and all sufficiently large $n$, the same Gabor system forms a frame whenever
$ab\leq n^{-\frac{2}{3}-\eta}$.

Moreover, we obtain an asymptotically sharp result near the coordinate
axes, i.e., when one of the parameters $a$ or $b$ is small. Namely, for every $\delta>0$ and $\rho\in(0,\frac{1}{2})$ and all sufficiently large $n$ we prove that if $\min\{a,b\}\leq n^{-\frac{1}{2}-\delta}$ and
$ab\leq \frac{1}{2}-\rho$ then $\mathcal{G}(h_n,a\Z\times b\Z)$ forms a frame.

\end{abstract}

\date{}
\maketitle

\section{Introduction and Main Results}
\subsection{Introduction}
For a window $g\in L^2(\R)$ and $a,b>0$ the Gabor system $\mathcal{G}(g, a\Z \times b\Z)$ consists of time-frequency shifts of $g$ and is defined by
$$
\mathcal{G}(g, a, b):=\mathcal{G}(g, a\Z \times b\Z) := \{g_{k, l}(t) = e^{2 \pi i b l t} g(t - ak) : k \in \Z, l \in \Z\}.
$$
A fundamental problem in time-frequency analysis is to determine the assumptions to be imposed on $g,a,b$ to ensure that the system $\mathcal{G}(g, a\Z \times b\Z)$ 
forms a frame for $L^2(\R)$, that is, there exist two constants $A, B > 0$ such that for all $f\in L^2(\R)$ the inequalities
$$A\|f\|_2^2 \le\sum_{(k, l)\in \Z^2}|\langle f, g_{k,l}\rangle|^2 \le B\|f\|_2^2$$
are true. In other words, we ask for the description of the frame set 
\begin{equation}
    \mathcal{F}(g):= \left\{(a,b) \in \R^2_+ : \mathcal{G}(g,a,b) \text{ forms a frame for } L^2(\R) \right\}.
\end{equation}

A complete solution to this problem is known only for a few individual windows
$g$, including the Gaussian kernel \cite{MR1188007, MR1173118}, the hyperbolic secant \cite{MR1884237}, the symmetric exponential function \cite{MR1964306}, the Cauchy kernel \cite{MR4345947}, and the indicator function of an interval \cite{MR3545108}. More generally, complete descriptions of the frame set are also known for certain families of windows,
such as totally positive functions \cite{totallypos_full2026, MR3763405, MR3053565}, rational functions with poles in
the same half-plane \cite{MR4542702}, and one-sided exponentially decaying functions \cite{MR5077907}.

For other important classes of functions like polynomials with Gaussian weight, $B$-splines, and ratios of exponential polynomials, only partial descriptions
of the frame set are available, see \cite{MR4484792, MR3398948, MR2529475, MR4832036, MR4865221}. 

The Hermite functions are at the heart of Gabor analysis, and the structure of their frame sets has been investigated in several papers. Following \cite{MR2529475}, we define the $n$-th Hermite function by
\begin{equation}\label{eq:hn}
    h_n(t)
    =
    c_n(-1)^n e^{\pi t^2}
    \frac{d^n}{dt^n}e^{-2\pi t^2},    
    \qquad t\in\R, \ n \in \N_0,
\end{equation}
where the positive constant $c_n$ is chosen so that $\|h_n\|_2=1$ and set
\begin{equation}
    \G(h_n,a\Z\times b\Z)=:\G_n(a,b),
    \qquad a,b>0.
\end{equation}

The first natural example is the function $h_0$, i.e., the Gaussian kernel,
and the frame properties of the Gabor system $\mathcal{G}(h_0,a,b)$ were completely described by Seip and Lyubarskii in papers \cite{MR1188007, MR1173118}:
\begin{equation}
    \mathcal{F}(h_0) = \left\{(a,b) \in \R^2_+ : ab < 1 \right\}.
\end{equation}

For higher-order Hermite functions, the structure of the frame set is significantly more complicated (see the comprehensive review \cite{MR3232589}).
The known results on the structure of $\mathcal{F}(h_n)$ can be divided into two groups. 

The statements from the first group describe the pairs $(a,b)$ that do not belong to the frame set $\mathcal{F}(h_n)$.
For instance, in \cite{MR3027914}, Lyubarskii and Nes proved that no point on any of the hyperbolas $ab=\frac{m}{m+1}, m \in \N$ belongs to the frame set $\mathcal{F}(h_{2n-1}), n \in \N.$ 
Further obstructions to the frame property were obtained by
Lemvig~\cite{MR3624951} and substantially extended by Horst, Lemvig, and
Videb{\ae}k~\cite{MR4849800}. Together, these results provide non-frame
points for every Hermite function $h_n$, $n\geq2$. The constructions
in~\cite{MR4849800} include several rational hyperbolas, namely
$ab=\frac12,\frac13,\frac14,$ and $\frac23$. Moreover, the location of
their non-frame points depends on $n$: one lattice parameter is of order
$n^{1/2}$, while the other is of order $n^{-1/2}$.

The second group consists of results that provide sufficient conditions on the parameters $(a,b)$ ensuring that the Gabor system $\mathcal{G}_n(a,b)$ forms a frame for $L^2(\R)$. Gr\"ochenig and Lyubarskii in papers \cite{MR2292280, MR2529475} established that
\begin{equation}
    \mathbb{H}_{n+1} := \left\{(a,b) \in \R^2_+ : ab < \frac{1}{n+1} \right\}
    \subset \mathcal{F}(h_n).
\end{equation}
Their proof uses the methods from complex analysis and the connection between the frame properties of Gabor systems and sampling and interpolation properties in Fock spaces. Recently, in \cite{MR4887696}, we showed that at least for the square lattices, i.e., in the case $a=b$, the frame set $\mathcal{F}(h_n)$ can be enlarged beyond $ab<(n+1)^{-1}.$ 
Thus, as $n\to\infty$, there remains a substantial gap between the
general sufficient condition $ab<(n+1)^{-1}$ and the known obstructions
to the frame property.

This paper contributes to this second group of results. We significantly enlarge the region that is known to belong to $\mathcal{F}(h_n).$ Moreover, some regions that we determine to belong to the frame set are asymptotically sharp. 

Our approach exploits several special properties of Hermite functions,
in particular their invariance under the Fourier transform and their
connection with Laguerre polynomials. We use two different frame criteria
in complementary regions of the parameter space: a criterion based on
Janssen's representation when neither lattice parameter is too small,
and a criterion based on a Wirtinger inequality close to the coordinate
axes. We recall the required properties of Hermite and Laguerre functions in Section~2.

\subsection{Main Results}

The following statement is the main result of this paper.

\begin{theorem}\label{thm:main}
    For every $\eta,\delta>0$ and every $\rho\in(0,\frac12)$, there exists $n_{\eta,\delta,\rho}\in\N$ such that, for every $n\geq n_{\eta,\delta,\rho}$, the region
    \begin{equation}\label{eq:main_safety_region}
        \left\{
            (a,b)\in\R_+^2:
            ab\leq n^{-\frac23-\eta}
        \right\}
        \cup
        \left\{
            (a,b)\in\R_+^2:
            ab\leq\frac12-\rho,\quad
            \min\{a,b\}\leq n^{-\frac12-\delta}
        \right\}
    \end{equation}
    is contained in the frame set of the $n$-th Hermite function.
\end{theorem}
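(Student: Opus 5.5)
The two regions in \eqref{eq:main_safety_region} are handled by two different frame criteria, as the introduction announces. By the Fourier invariance $\widehat{h_n}=(-i)^n h_n$, the system $\G_n(a,b)$ is a frame if and only if $\G_n(b,a)$ is. So in the second region we may assume $a\le b$, i.e.\ $a=\min\{a,b\}\le n^{-1/2-\delta}$. In the first region we may assume $a\le b$ as well.

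\textbf{First region: Janssen's representation.} Write the frame operator as
\[
S=\frac{1}{ab}\sum_{k,l}\langle h_n,\pi(k/b,l/a)h_n\rangle\,\pi(k/b,l/a).
\]
Since $\|h_n\|_2=1$, a sufficient condition for the frame property is
\[
\sum_{(k,l)\neq(0,0)}|V_{h_n}h_n(k/b,l/a)|<1.
\]
The ambiguity function has the closed form
\[
|V_{h_n}h_n(z)|=e^{-\pi|z|^2/2}\,|L_n(\pi|z|^2)|,
\]
with $L_n$ the Laguerre polynomial. The adjoint lattice has density $1/(ab)\ge n^{2/3+\eta}$, and we need uniform bounds on the Laguerre function $e^{-x/2}L_n(x)$:
\begin{itemize}
\item in the oscillatory zone $x\lesssim 4n$, a bound of order $(nx)^{-1/4}$ (for $x\gtrsim 1/n$);
\item near the turning point $x\approx 4n$, a bound of order $n^{-1/3}$;
\item beyond the turning point, exponential decay.
\end{itemize}
The sum is then split over annuli in $|z|^2$ and each annulus is bounded by its lattice-point count. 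Taking $A=1/b$, $B=1/a$ as the spacings, with $AB=1/(ab)$, the count in an annulus is roughly $\pi\,\mathrm{area}\cdot ab$ plus boundary terms of order $|z|(1/A+1/B)$ plus axis contributions.

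The turning-point annulus $\pi|z|^2\in[4n-Cn^{1/3},4n+Cn^{1/3}]$ contains about $n^{1/3}\cdot ab$ points of size $n^{-1/3}$, contributing about $ab$. The bulk contributes roughly
\[
\int (n r^2)^{-1/4}\, r\,dr\; ab \sim n^{-1/4}\,n^{3/4}\,ab=n^{1/2}\,ab.
\]
This estimate is too crude: we should exploit oscillation, or at least use the $L^1$-type sum of $|L_n|$ weighted against area. I expect the sharp exponent $2/3$ to come from the lattice points lying on or near the axes. When $a\le b$ is very small relative to $b$, the lattice is highly anisotropic, and a single row $\{(k/b,0)\}$ contributes
\[
\sum_k |\ell_n(\pi k^2/b^2)| \approx b\int |\ell_n(\pi t^2)|\,dt \sim b\,n^{1/4}\cdot\text{(log)}.
\]
Combined with the Airy region, this forces a condition of the type $ab\,n^{2/3}\to 0$. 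This bookkeeping is the \textbf{main obstacle}: one needs a uniform Laguerre bound (Krasikov-type, $e^{-x/2}|L_n(x)|\lesssim x^{-1/4}(4n-x)^{-1/4}$ away from the turning point and $n^{-1/3}$ near it), together with careful lattice-point counts that remain uniform in the shape of the lattice. Where the anisotropy is extreme ($a\le n^{-1/2-\delta}$), we switch to the second criterion. This is legitimate because $n^{-2/3-\eta}<\frac12-\rho$, so the overlap of the two regions is covered.

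\textbf{Second region: Wirtinger inequality.} For $a$ small we use the Ron–Shen/Walnut viewpoint. For each $x$, consider the samples $\{f(x+ak)\overline{h_n}\}$ and the frame operator. For $b\le 1/(\text{support})$-type reasoning one uses
\[
\sum_{k,l}|\langle f,g_{k,l}\rangle|^2=\frac1b\int\sum_k |f(t)|^2|h_n(t-ak)|^2\,dt+\text{error}.
\]
The main term is bounded below since $\sum_k|h_n(t-ak)|^2\approx 1/a$, up to a deviation controlled by $\|h_n'\|\,\|h_n\|\sim n^{1/2}$ times $a$; this is small because $a\le n^{-1/2-\delta}$. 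The error involves the off-diagonal aliasing terms $\sum_k h_n(t-ak)\overline{h_n(t-ak-j/b)}$ with $j\neq0$. Rather than controlling these directly, we discretize in the spirit of sampling with a Wirtinger (Poincar\'e) inequality: on each interval of length $1/b$, the discrepancy between $\int|F|^2$ and its Riemann sum with step $a$ is bounded by $(a/\pi)^2\|F'\|^2$, applied to $F=f\cdot h_n(\cdot-x)$ in the Fourier variable. This yields a lower frame bound of the form $\frac{1}{ab}\bigl(1-2ab-O(a\sqrt n)\bigr)$, where the $2ab$ arises from the Wirtinger constant. Hence the condition $ab\le\frac12-\rho$, and for $n$ large ($a\sqrt n\le n^{-\delta}$) the lower bound is positive. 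The upper frame bound is automatic since $h_n$ lies in the Feichtinger algebra.
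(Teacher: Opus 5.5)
\textbf{Verdict: genuine gaps.} Your architecture matches the paper's: Janssen's test where neither parameter is small, the Wirtinger test when $\min\{a,b\}\le n^{-1/2-\delta}$, glued using $n^{-2/3-\eta}\le\frac12-\rho$. But neither half is actually proved, and the Janssen heuristics misplace the difficulty.

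\textbf{Janssen half.} The area term you call too crude is harmless: it is $O(n^{1/2}ab)=O(n^{-1/6-\eta})$. With your own bound $(nx)^{-1/4}$, an axis row $\{(k/b,0)\}$ contributes $O(b)$, not $bn^{1/4}$. So the exponent $\frac23$ is asserted, not derived, and the decisive bookkeeping is left as the ``main obstacle''.

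The paper carries this out with three ingredients:
\begin{enumerate}
\item a lattice-point bound for elliptic annuli that is uniform in $a,b$ (Lemma~\ref{lem:ellipse_lattice_count});
\item a five-layer partition of $\{0<X_{k,l}\le 10n\}$, with a dyadic refinement of the layer $s-n^{p}<X_{k,l}\le s-n^{1/3}$;
\item a separate tail estimate for $X_{k,l}\ge 6n$, via $x^n/n!$, Stirling and Lemma~\ref{lem:tail_estimate}.
\end{enumerate}
The exponent $\frac23$ comes from balancing two layers. In the innermost one, $X_{k,l}\ge \pi n^{1/3+2(\eta-\delta)}$ and the pointwise bound is about $X^{-1/2}$. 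In the bulk one, a sup bound $n^{(1-p-t)/2}$ multiplies a count $n^{1/3+\delta-\eta}$. The balance is possible only with $t<\frac23$, $p<1$ and $p+t>\frac53-2(\eta-\delta)$.

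You also cannot split at the theorem's own $\delta$. The Janssen estimates need $\max\{a,b\}\le n^{-1/6+\delta-\eta}\to 0$, i.e.\ $\delta<\eta$. For $\delta>\eta+\frac23$ the larger parameter $b$ can exceed $\sqrt n$. Then the points $(k/b,0)$ with $\pi k^2/b^2\le \frac{1}{2(n+1)}$ alone make the Janssen sum unbounded. You must choose an auxiliary $\delta'<\eta$; the paper takes $\delta'=\eta'/2$ with $\eta'=\min\{\eta,\frac14\}$. The set $\{ab\le n^{-2/3-\eta},\ \min\{a,b\}<n^{-1/2-\delta'}\}$ is then covered by the Wirtinger result with $\varepsilon=\delta'$.

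\textbf{Wirtinger half.} Here the decisive estimate is missing. As written, you apply the Wirtinger step to $F=f\cdot h_n(\cdot-x)$, which would put a derivative on $f$; that is not controlled for $f\in L^2(\R)$. What works is to sample $x\mapsto V_{h_n}f(x,bl)$ with step $a$. Its derivative is $-V_{h_n'}f(x,bl)$, and Plancherel then yields exactly Corollary~\ref{cor:Wirtinger_rectangular}. So one must show, for all $x\in[0,\frac12]$ and uniformly in $a\le b\le(\frac12-\rho)/a$, that
\[
\sum_{k\in\Z}|h_n(b(x-k))|^2\bigl(1-4a^2b^2(x-k)^2\bigr)>0 .
\]
Your bound $\frac1{ab}\bigl(1-2ab-O(a\sqrt n)\bigr)$ rests on two unjustified assumptions:
\begin{itemize}
\item It presupposes $\sum_k|h_n(b(x-k))|^2\approx\frac1b$. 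This is false when $b$ is a large multiple of $\sqrt n$: then at $x=\frac12$ the sum is about $2|h_n(b/2)|^2$, which is exponentially small in $b^2$.
\item It presupposes a uniform bound on the weighted mean of $(x-k)^2$. This is the whole difficulty, because the nearest samples $b(x-k)$ can sit on or near zeros of $h_n$, pushing the mass to distant indices.
\end{itemize}

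The paper's idea is to keep a single positive term. Write $n=2m+\tau$ and take $k_0=\lceil b^{-1}\sqrt{5m/6}+\frac12\rceil$, so that $2\pi b^2(x-k_0)^2$ lies just beyond the largest zero of $\mathcal L_m^{-1/2+\tau}$. After dividing out the common normalizing constant, the product formula gives a weight at least $\frac14e^{-35\pi m}$ (or $\frac14e^{-2\pi b^2}$ if $k_0=1$). Moreover $1-4a^2b^2(x-k_0)^2\ge c_\rho>0$.

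Every negative term has $|x-k|>\frac1{2ab}$, i.e.\ Laguerre argument $>\frac{\pi}{2a^2}\ge\frac\pi2(2m)^{1+2\varepsilon}$. The bound $x^m/m!$ and Lemma~\ref{lem:tail_estimate} then give total negative mass at most $e^{-\frac{\pi}{2a^2}(1-c_\rho)}$. The two hypotheses enter through this comparison, not through an $O(a\sqrt n)$ perturbation of $1-2ab$:
\begin{itemize}
\item $a\le n^{-1/2-\varepsilon}$ makes this beat $e^{-35\pi m}$;
\item $ab\le\frac12-\rho$ makes it beat $e^{-2\pi b^2}$ and keeps the $k_0=1$ term positive.
\end{itemize}
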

\begin{figure}[ht]
    \centering
    \includegraphics[width=0.625\linewidth]{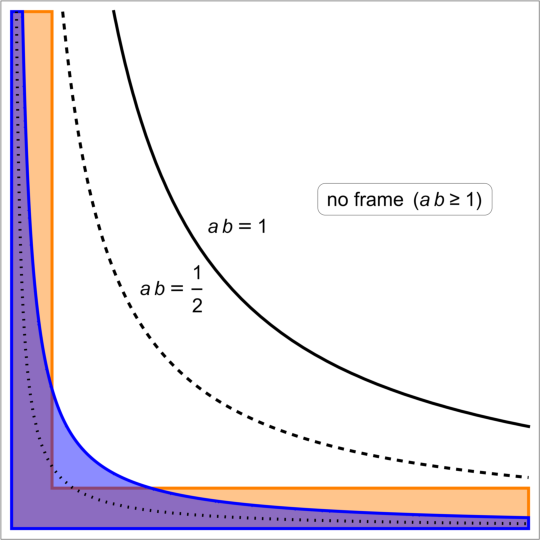}
    \caption{Illustration of the new safety region from Theorem~\ref{thm:main}. The dotted line inside the new safety region represents the hyperbola $ab=1/(n+1)$.}
    \label{fig:new_safety}
\end{figure}

\begin{remark}
The inclusion
\begin{equation}
    \left\{(a,b)\in\R_+^2:
    ab\leq\frac12-\rho,\quad
    \min\{a,b\}\leq n^{-\frac12-\delta}
    \right\}
    \subset \mathcal{F}(h_n)
\end{equation}
provided by Theorem~\ref{thm:main} is essentially asymptotically optimal in two directions.
First, by \cite{MR3027914}, the Gabor system $\mathcal{G}_n(a,b)$ does not form a frame whenever
$ab=\frac12$ and $n$ is odd. Since $\rho>0$ can be chosen arbitrarily small, the threshold $\frac12$ for the product $ab$ cannot be improved in general.
Second, as mentioned above, Horst, Lemvig, and Videb{\ae}k~\cite[Section~5.3]{MR4849800}
construct non-frame points for which one lattice parameter is of order $n^{1/2}$ and the other is of order $n^{-1/2}$. Thus, the exponent $\frac{1}{2}$ in the condition on $\min\{a,b\}$ is also asymptotically optimal.
\end{remark}

\begin{remark}
In fact, Gr\"ochenig and Lyubarskii proved in \cite{MR2529475} a more general
result, which we mentioned in the introduction. Namely, if
$g(x)=p(x)e^{-\gamma x^2}, \gamma>0,$ where $p$ is a polynomial of degree $n$, then
\begin{equation}
    \mathbb{H}_{n+1}\subset\mathcal{F}(g).
\end{equation}
Recently, the third author together with Ulanovskii ~\cite{ulanovskii26} showed that this result
is sharp within the class of polynomials with Gaussian weight. More precisely,
for every $a,b>0$ satisfying $ab=(n+1)^{-1}$ and every $\gamma>0$, there
exists a polynomial $p_{a,b,\gamma}$ of degree $n$ such that, for
$$g(x)=p_{a,b,\gamma}(x)e^{-\gamma x^2},$$
the Gabor system $\mathcal{G}(g,a,b)$ does not form a frame for $L^2(\R)$.
Thus, for a general polynomial multiplied by a Gaussian weight, the region
$\mathbb{H}_{n+1}$ cannot be enlarged uniformly.
\end{remark}

\subsection{Outline of the paper}
The paper is organized as follows. In Section~2, we recall several useful criteria that ensure Gabor systems form frames, introduce the classical Laguerre polynomials, and discuss some of their basic properties, including their connection to Hermite functions and known upper bounds in different regions. In the same section, we also formulate and prove two auxiliary statements concerning estimates for series with rapidly decaying terms and bounds on the number of lattice points in certain regions of the plane. In Section~3, we obtain a new region in the frame set in the regime where neither parameter $a$ nor $b$ is small. In Section~4, we study the complementary regime in which at least one of the parameters $a$ or $b$ is small. We conclude the paper with Section~5, where we prove Theorem~\ref{thm:main}.

\subsection{AI disclosure}
The original approach, developed by the authors in the middle of 2024, yielded an asymptotic safety region of the form $ab \lesssim n^{-5/6}$. Later, during the preparation of the manuscript, the authors used OpenAI's ChatGPT to refine the argument, leading to the improved region $ab\leq n^{-2/3-\eta}$ for every $\eta>0$. ChatGPT was also used to improve the clarity and readability of the exposition. All mathematical arguments were independently verified by the authors, who take full responsibility for the content of the manuscript.

\section{Preliminaries and Notation}
\subsection{Notation}
Below, by $\lceil s\rceil$ we denote the smallest integer greater than or equal to $s$, while $\lfloor s\rfloor$ denotes the largest integer less than or equal to $s$. By $\N$ we denote the set of positive integers and $\N_0 = \N \cup \{0\}$.

The Hilbert space of complex-valued, square-integrable functions on the real line is denoted by $L^2(\R)$. For the norm of $f \in L^2(\R)$ we write $\|f\|_2$, with the usual convention that functions that agree almost everywhere are identified.

The Fourier transform of a function $f$ is given by
\begin{equation}
    \mathcal{F} f (y) = \widehat{f}(y) = \int_{\R} f(t) e^{-2 \pi i y t} \, dt.
\end{equation}
This formula is valid for suitable dense subspaces of $L^2(\R)$, such as the Schwartz space $\mathcal{S}(\R)$. The Fourier transform $\mathcal{F}$ extends to a unitary operator on $L^2(\R)$ by density, and we denote the image of $f$ under the Fourier transform by $\widehat{f}$. This is the usual convention in time-frequency analysis, see, e.g., \cite{Gro01}. It is compatible with the normalization of the Hermite functions we have chosen. In particular, for $h_n$ defined in \eqref{eq:hn}, we have
\begin{equation}
    \mathcal{F} h_n = \widehat{h}_n = (-i)^n h_n, \quad n \in \N_0.
\end{equation}
We refer to \cite{Fol89} for more details.
\subsection{Janssen and Wirtinger tests}

In what follows, our main goal is to determine new regions in the frame sets of Hermite functions. To this end, we will use the following two sufficiency criteria for the Gabor systems $\mathcal{G}_n(a,b)$ to form a frame in $L^2(\R)$.

The first one is based on the Janssen representation of the frame operator. It was introduced in \cite{Tsc00_Master} for finite Gabor systems and in \cite{Wie13_PhD} for the continuous case; see also \cite{MR4887696}.
\begin{proposition}[Janssen test]\label{pro:Janssen} Let $g\in L^2(\mathbb R)$, $\|g\|_2=1,$ and let $\Lambda \subset \R^2$ be a lattice.
    The Gabor system $\G(g,\Lambda)$ is a frame for $L^2(\R)$ if
    \begin{equation}
        \sum_{\lambda^\circ \in \Lambda^\circ} |V_g g(\lambda^\circ)| < 2,
    \end{equation}
    where the notation $\Lambda^\circ$ stands for the adjoint lattice.
\end{proposition}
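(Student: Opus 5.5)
The statement to prove is the Janssen test, Proposition~\ref{pro:Janssen}. The plan is to use the Janssen representation of the Gabor frame operator and show that it is a small perturbation of a multiple of the identity.

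First, note that the hypothesis forces the lattice to be dense enough. The term $\lambda^\circ=0$ of the sum is $|V_gg(0)|=\|g\|_2^2=1$, so the hypothesis says
\begin{equation}
\sum_{\lambda^\circ\neq 0}|V_gg(\lambda^\circ)|<1 .
\end{equation}
In particular $V_gg$ is absolutely summable over $\Lambda^\circ$.

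Next, write the frame operator of $\mathcal{G}(g,\Lambda)$ via Janssen's representation, valid for such absolutely summable $V_gg$ (the Tolimieri--Orr condition A holds):
\begin{equation}
S=\frac{1}{\mathrm{vol}(\Lambda)}\sum_{\lambda^\circ\in\Lambda^\circ}\overline{V_gg(\lambda^\circ)}\,\pi(\lambda^\circ),
\end{equation}
up to the standard conjugation and phase convention. Here $\pi(\lambda^\circ)$ is the time-frequency shift, which is unitary, and the series converges absolutely in operator norm. The $\lambda^\circ=0$ term equals $\mathrm{vol}(\Lambda)^{-1}I$. Hence
\begin{equation}
\Bigl\|S-\mathrm{vol}(\Lambda)^{-1}I\Bigr\| \le \mathrm{vol}(\Lambda)^{-1}\sum_{\lambda^\circ\neq0}|V_gg(\lambda^\circ)| < \mathrm{vol}(\Lambda)^{-1}.
\end{equation}
By the Neumann series, $S$ is then bounded and invertible. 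Explicitly,
\begin{equation}
A=\mathrm{vol}(\Lambda)^{-1}\Bigl(2-\sum_{\lambda^\circ}|V_gg(\lambda^\circ)|\Bigr)>0,
\qquad
B=\mathrm{vol}(\Lambda)^{-1}\sum_{\lambda^\circ}|V_gg(\lambda^\circ)|
\end{equation}
are frame bounds, because $S$ is self-adjoint and $\langle Sf,f\rangle$ is within the stated distance of $\mathrm{vol}(\Lambda)^{-1}\|f\|^2$.

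The main obstacle is justifying Janssen's representation for a general $g\in L^2$ under only the summability of $V_gg$ on $\Lambda^\circ$. I would first verify that the system is Bessel. Summability on the adjoint lattice yields the Bessel property by the Wexler--Raz/Fundamental Identity duality: $\sum_{\lambda^\circ}|V_gg(\lambda^\circ)|<\infty$ gives the Bessel bound for $\mathcal{G}(g,\Lambda)$ with constant $\mathrm{vol}(\Lambda)^{-1}\sum|V_gg(\lambda^\circ)|$ via Tolimieri--Orr/Ron--Shen. Once the system is Bessel, I would establish the identity weakly on a dense class such as $f\in\mathcal{S}(\R)$ using the fundamental identity of Gabor analysis, and then extend it by continuity. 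In our application $g=h_n$ is a Schwartz function, so all of these conditions hold trivially, and one may simply cite the standard statement (e.g.\ Gr\"ochenig's book).
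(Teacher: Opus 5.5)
Your argument is correct and is the standard derivation of this criterion. The paper does not prove Proposition~\ref{pro:Janssen} itself but cites it, and the cited route is exactly yours: Janssen's representation under the Tolimieri--Orr summability condition gives $\|\mathrm{vol}(\Lambda)S-I\|\le\sum_{\lambda^\circ\neq 0}|V_gg(\lambda^\circ)|<1$, hence $S$ is invertible with the frame bounds you state (your opening remark that the hypothesis ``forces the lattice to be dense enough'' is unjustified at that point and unused, so it can be dropped).
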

Note that for $\Lambda=a \Z \times b \Z$ we have $\Lambda^\circ = \frac{1}{b} \Z \times \frac{1}{a} \Z$.

The next sufficiency criterion is based on a Wirtinger inequality and was used in \cite{GosSle23, MR3707488, SunZho03}.
\begin{proposition}[Wirtinger test]\label{pro:Wirtinger}
Let $g\in L^2(\mathbb R)$ be nonzero. Define
\begin{equation}\label{eq:Wirtinger}
    \delta_g:= \frac{1}{2} \inf_{x\in[0,1]}
    \sqrt{\frac{\displaystyle\sum_{k\in\mathbb Z}|\widehat g((x-k))|^2}{\displaystyle\sum_{k\in\mathbb Z}(x-k)^2|\widehat g((x-k))|^2}}.
\end{equation}
Then, the Gabor system $\mathcal{G}(g,\delta \mathbb Z\times \mathbb Z)$ is a frame for $L^2(\mathbb R)$ for all $\delta < \delta_g$.
\end{proposition}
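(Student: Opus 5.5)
The plan is to reduce the frame inequality to a one-dimensional sampling problem on the lattice $\delta\Z$ and then to control the sampling error with a Wirtinger inequality.

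\textbf{Step 1: rewriting the frame sum.} For $f\in L^2(\R)$, $t\in[0,1)$ and $s\in\R$, I will introduce the $1$-periodization
\begin{equation}
    \Phi_t(s):=\sum_{j\in\Z} f(t+j)\,\overline{g(t+j-s)} .
\end{equation}
Since the modulation parameter equals $1$, Parseval's identity on $L^2[0,1)$ applied to the sum over $l$ gives
\begin{equation}
    \sum_{k,l\in\Z}|\langle f, M_lT_{\delta k}g\rangle|^2=\sum_{k\in\Z}\int_0^1 |\Phi_t(\delta k)|^2\,dt .
\end{equation}
So the frame sum is the integral over $t$ of the energy of the samples of $s\mapsto \Phi_t(s)$ on $\delta\Z$.

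\textbf{Step 2: expressing the continuous energies through $\widehat g$.} Next I will compute $\int_0^1\!\int_\R|\Phi_t(s)|^2\,ds\,dt$ and $\int_0^1\!\int_\R|\partial_s\Phi_t(s)|^2\,ds\,dt$. To do this, I take the Fourier transform in $s$ and use Parseval once more in the periodic variable. I expect both quantities to be of the form
\begin{equation}
    \int_0^1 \sum_{m}|F_m(x)|^2\,\sum_{k}w(x-k)\,|\widehat g(x-k)|^2\,dx ,
\end{equation}
where the weight is $w\equiv1$ in the first case and $w(y)=(2\pi y)^2$ in the second. Here the $F_m$ are fibres of $\widehat f$ with $\sum_m\int_0^1|F_m|^2=\|f\|_2^2$.

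From this, the definition \eqref{eq:Wirtinger} gives the pointwise ratio bound in $x$. Integrating it yields
\begin{equation}
    \|\partial_s\Phi\|^2\le \frac{\pi^2}{\delta_g^2}\,\|\Phi\|^2,
\end{equation}
where $\|\cdot\|$ denotes the $L^2$ norm over $(t,s)\in[0,1)\times\R$. In addition, $\|\Phi\|^2\ge c\|f\|_2^2$ with $c=\inf_x\sum_k|\widehat g(x-k)|^2$; this infimum is positive whenever $\delta_g>0$ and the denominator of \eqref{eq:Wirtinger} is finite. The same identity, combined with the boundedness of the periodization of $|\widehat g|^2$, gives the upper (Bessel) bound.

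\textbf{Step 3: the Wirtinger inequality on cells.} On each cell $I_k=[\delta k,\delta(k+1)]$ I will apply the Wirtinger inequality for functions vanishing at an endpoint to $u(s)=\Phi_t(s)-\Phi_t(\delta k)$:
\begin{equation}
    \int_{I_k}|u|^2\,ds\le \frac{4\delta^2}{\pi^2}\int_{I_k}|u'|^2\,ds .
\end{equation}
Summing over $k$ and applying Minkowski's inequality gives
\begin{equation}
    \Big(\delta\sum_k|\Phi_t(\delta k)|^2\Big)^{1/2}\ \ge\ \|\Phi_t\|_{L^2(\R)}-\frac{2\delta}{\pi}\,\|\partial_s\Phi_t\|_{L^2(\R)} .
\end{equation}
I will square this, integrate in $t$ (using Minkowski in $L^2(dt)$), and insert the bound from Step 2. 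This gives
\begin{equation}
    \sum_{k,l}|\langle f,M_lT_{\delta k}g\rangle|^2\ \ge\ \frac1\delta\Big(1-\frac{2\delta}{\pi}\cdot\frac{\pi}{\delta_g}\Big)^2\|\Phi\|^2 .
\end{equation}
The prefactor is positive precisely when $\delta<\delta_g$ (up to tracking the exact constant), which yields the lower frame bound.

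\textbf{Main obstacle.} The delicate point is Step 2. The constants must be tracked exactly: the $2\pi$ coming from the derivative, the factor $\tfrac12$ in $\delta_g$, and the endpoint Wirtinger constant $2\delta/\pi$ must combine to give the threshold exactly $\delta_g$ rather than a multiple of it. Failing that, one should use the sharper mean-zero/periodic variant of the Wirtinger inequality on each cell. Interchanging sums and integrals also needs justification. I would first argue for $f$ in a dense class, such as $\mathcal S(\R)$ with $g$ having $\sum_k(1+|x-k|^2)|\widehat g(x-k)|^2$ bounded, and then extend to all of $L^2(\R)$ by density.
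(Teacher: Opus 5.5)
The paper gives no proof of this proposition; it is quoted from the literature. So the question is whether your argument closes on its own, and it does not: the constant in Step~3 is off by a factor of two.

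Steps~1 and~2 are correct. In particular, $\|\partial_s\Phi\|\le\frac{\pi}{\delta_g}\|\Phi\|$ is exactly what the definition of $\delta_g$ yields. Now let $\Psi_t$ be the step function equal to $\Phi_t(\delta k)$ on the whole cell $I_k$. The endpoint Wirtinger inequality on intervals of length $\delta$ then only gives $\|\Phi-\Psi\|\le\frac{2\delta}{\pi}\|\partial_s\Phi\|\le\frac{2\delta}{\delta_g}\|\Phi\|$. Your prefactor is therefore $1-\frac{2\delta}{\delta_g}$, and the argument proves the frame property only for $\delta<\delta_g/2$, not for $\delta<\delta_g$.

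This loss is not cosmetic. In the proof of Theorem~\ref{thm:Wirtinger_test} it would replace $ab\le\frac12-\rho$ by $ab\le\frac14-\rho$ and destroy the asymptotic sharpness. Your fallback does not rescue it as stated. The mean-zero inequality compares $\Phi_t$ with its cell average, not with the sample $\Phi_t(\delta k)$. The two-endpoint (Dirichlet) version does not apply to $\Phi_t-\Phi_t(\delta k)$, which vanishes at only one end of $I_k$.

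The missing idea is to let each sample govern only the points nearest to it. Put $\Psi_t(s):=\Phi_t(\delta k)$ for $|s-\delta k|<\delta/2$. Then still $\|\Psi_t\|_{L^2(\R)}^2=\delta\sum_k|\Phi_t(\delta k)|^2$. But now $\Phi_t-\Psi_t$ vanishes at an endpoint of each half-cell $[\delta k-\frac{\delta}{2},\delta k]$ and $[\delta k,\delta k+\frac{\delta}{2}]$, which have length $\delta/2$. The same endpoint inequality gives $\|\Phi-\Psi\|\le\frac{\delta}{\pi}\|\partial_s\Phi\|\le\frac{\delta}{\delta_g}\|\Phi\|$, hence
\[
\sum_{k,l\in\Z}\bigl|\langle f,M_lT_{\delta k}g\rangle\bigr|^2\ \ge\ \frac{1}{\delta}\Bigl(1-\frac{\delta}{\delta_g}\Bigr)^2\,\inf_{x\in[0,1]}\sum_{k\in\Z}|\widehat g(x-k)|^2\,\|f\|_2^2 ,
\]
which is positive exactly when $\delta<\delta_g$.

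Alternatively, compare $\Phi_t$ with its piecewise linear interpolant on $\delta\Z$. Its squared norm is at most $\delta\sum_k|\Phi_t(\delta k)|^2$. Its slope on $I_k$ is the mean of $\partial_s\Phi_t$ over $I_k$. The Dirichlet inequality $\int_{I_k}|v|^2\le\frac{\delta^2}{\pi^2}\int_{I_k}|v'|^2$ then gives the same constant $\delta/\pi$.

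Finally, your remark that $\inf_x\sum_k|\widehat g(x-k)|^2>0$ follows from $\delta_g>0$ is not right in general. For $\widehat g=\mathbf{1}_{[-1/4,1/4]}$ the quotient in \eqref{eq:Wirtinger} is $0/0$ on $(\frac14,\frac34)$, and the system is never a frame. This positivity must be part of the implicit hypotheses, as must the boundedness of the periodization that you use for the Bessel bound. Both hold for the Hermite windows used in the paper, whose periodizations are continuous and strictly positive.
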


The (unitary) dilation operator $\mathcal{D}_L$, $L > 0$, acts on a function by the rule
\begin{equation}
    \mathcal{D}_L f(t) = \sqrt{1/L} \ f(t/L).
\end{equation}
There is a well-known equivalence between dilated Gabor systems, which is a special case of a more general unitary equivalence principle, see \cite{Fau25_SampTA, Gos15} and \cite[Chap.~9.4]{Gro01}.
\begin{proposition}[Dilation of Gabor frames]\label{pro:dilation}
    The Gabor system $\G(g, a \Z \times b \Z)$ is a frame if and only if $\G(\mathcal{D}_L g, (a L) \Z \times (b/L) \Z)$ is a frame and they have the same frame bounds.
\end{proposition}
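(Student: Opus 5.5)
The plan is to build an explicit unitary operator that maps one Gabor system onto the other, element by element. The natural choice is the dilation $\mathcal{D}_L$ itself. First I check that $\mathcal{D}_L$ is unitary on $L^2(\R)$. The substitution $s=t/L$ gives
\begin{equation}
\|\mathcal{D}_L f\|_2^2=\frac1L\int_\R |f(t/L)|^2\,dt=\|f\|_2^2 .
\end{equation}
The map is also invertible, with $\mathcal{D}_L^{-1}=\mathcal{D}_{1/L}$, so $\mathcal{D}_L$ is unitary and $\mathcal{D}_L^*=\mathcal{D}_{1/L}$.

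Next I compute how $\mathcal{D}_L$ acts on a single Gabor atom $g_{k,l}(t)=e^{2\pi i blt}g(t-ak)$. For each $t$,
\begin{equation}
\mathcal{D}_L g_{k,l}(t)=\sqrt{1/L}\,e^{2\pi i bl t/L}\,g\bigl((t-aLk)/L\bigr)=e^{2\pi i (b/L)l t}\,(\mathcal{D}_L g)(t-(aL)k).
\end{equation}
This is exactly the $(k,l)$-element of $\G(\mathcal{D}_L g,(aL)\Z\times(b/L)\Z)$. So $\mathcal{D}_L$ maps the first system bijectively onto the second, with the index $(k,l)$ preserved.

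Finally, I transfer the frame inequalities. For $f\in L^2(\R)$, unitarity gives $\langle f,\mathcal{D}_L g_{k,l}\rangle=\langle \mathcal{D}_{1/L}f,g_{k,l}\rangle$ and $\|\mathcal{D}_{1/L}f\|_2=\|f\|_2$. Hence
\begin{equation}
\sum_{k,l}|\langle f,\mathcal{D}_L g_{k,l}\rangle|^2=\sum_{k,l}|\langle \mathcal{D}_{1/L}f,g_{k,l}\rangle|^2 .
\end{equation}
The function $\mathcal{D}_{1/L}f$ ranges over all of $L^2(\R)$ as $f$ does, and it has the same norm as $f$. Therefore the inequalities $A\|f\|_2^2\le\cdots\le B\|f\|_2^2$ hold for the first system exactly when they hold for the second, with the same constants $A$ and $B$. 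Both implications follow at once, and the frame bounds coincide.

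There is no real obstacle here. The only point that needs care is keeping the normalization factor $\sqrt{1/L}$ and the rescaling of the modulation parameter to $b/L$ consistent in the atom computation.
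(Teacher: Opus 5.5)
Your argument is correct and complete. $\mathcal{D}_L$ is unitary and maps the atom $g_{k,l}$ of $\G(g,a\Z\times b\Z)$ exactly to the $(k,l)$-atom of $\G(\mathcal{D}_L g,(aL)\Z\times(b/L)\Z)$, so the two frame sums coincide under the norm-preserving bijection $f\mapsto\mathcal{D}_{1/L}f$ and the frame bounds agree. The paper gives no proof of its own; it cites this as a special case of the general unitary (metaplectic) equivalence principle, and your argument is that principle worked out by hand for dilation by $L$ in time and $1/L$ in frequency.
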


Combining Proposition~\ref{pro:Wirtinger} with Proposition~\ref{pro:dilation}, we obtain the following result (cf.~\cite{Fau26}).
\begin{corollary}\label{cor:Wirtinger_rectangular}
    Let $g \in L^2(\R)$ be nonzero. Define
    \begin{equation}\label{eq:Wirtinger_rectangular}
        \delta_g(b) := \delta_{\mathcal{D}_b g}
        = \frac{1}{2} \inf_{x \in [0,1]}
        \sqrt{\frac{\displaystyle\sum_{k\in\mathbb Z}|\widehat g(b(x-k))|^2}{\displaystyle\sum_{k\in\mathbb Z}(x-k)^2|\widehat g(b(x-k))|^2}}
    \end{equation}
    If $a b < \delta_g(b)$, then the Gabor system $\G(g, a \Z \times b \Z)$ is a frame.
\end{corollary}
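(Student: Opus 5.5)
The plan is to reduce the rectangular lattice $a\Z\times b\Z$ to a lattice of the form $\delta\Z\times\Z$ by a dilation, apply Proposition~\ref{pro:Wirtinger} to the dilated window, and then rewrite the resulting constant in terms of $\widehat g$.

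\textbf{Step 1 (dilation).} Apply Proposition~\ref{pro:dilation} with $L=b$. This shows that $\G(g,a\Z\times b\Z)$ is a frame if and only if $\G(\mathcal{D}_b g,(ab)\Z\times\Z)$ is a frame. The new lattice has exactly the form $\delta\Z\times\Z$ required by Proposition~\ref{pro:Wirtinger}, with $\delta=ab$.

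\textbf{Step 2 (Wirtinger test).} Since $\mathcal{D}_b$ is unitary and $g\neq 0$, the window $\mathcal{D}_b g$ is nonzero. Proposition~\ref{pro:Wirtinger} therefore applies to it: $\G(\mathcal{D}_b g,\delta\Z\times\Z)$ is a frame for every $\delta<\delta_{\mathcal{D}_b g}$. Taking $\delta=ab$ and using Step~1, the hypothesis $ab<\delta_{\mathcal{D}_b g}$ gives that $\G(g,a\Z\times b\Z)$ is a frame.

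\textbf{Step 3 (identifying the constant).} It remains to check that $\delta_{\mathcal{D}_b g}$ equals the expression in \eqref{eq:Wirtinger_rectangular}. A change of variables in the Fourier integral (first on $\mathcal{S}(\R)$, then by density) gives
\begin{equation}
\widehat{\mathcal{D}_b g}(y)=\int_\R b^{-1/2}g(t/b)e^{-2\pi i y t}\,dt=b^{1/2}\,\widehat g(by).
\end{equation}
Hence $|\widehat{\mathcal{D}_b g}(x-k)|^2=b\,|\widehat g(b(x-k))|^2$. The common factor $b$ cancels between the numerator and the denominator in \eqref{eq:Wirtinger}, which yields precisely \eqref{eq:Wirtinger_rectangular}.

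No step here is a genuine obstacle. The only point needing care is the correct dilation factor inside $\widehat g$, and noting that ratios involving infinite sums are handled by the same conventions as in Proposition~\ref{pro:Wirtinger}. Those conventions are unaffected by multiplying both sums by the positive constant $b$.
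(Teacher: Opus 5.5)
Your proof is correct and follows the paper's argument. Like the paper, you dilate with $L=b$ to reduce to the lattice $(ab)\Z\times\Z$ and apply the Wirtinger test to $\mathcal{D}_b g$, then use $\widehat{\mathcal{D}_b g}(y)=\sqrt{b}\,\widehat g(by)$; the only difference is that you derive this identity by a direct change of variables, whereas the paper quotes the intertwining relation $\mathcal{F}\mathcal{D}_b=\mathcal{D}_{1/b}\mathcal{F}$.
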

\begin{proof}
    For the proof, it is necessary to note that the intertwining relation of the dilation operator $\mathcal{D}_b$ and the Fourier transform $\mathcal{F}$ as a unitary operator on $L^2(\R)$ is given by
    \begin{equation}
        \mathcal{F} \mathcal{D}_b = \mathcal{D}_{1/b} \mathcal{F}.
    \end{equation}
    We use the dilation equivalence of Gabor systems in Proposition~\ref{pro:dilation} to transfer the properties of $\G(g, a \Z \times b \Z)$ to the system $\G(\mathcal{D}_b g, \delta\Z \times \Z)$, $\delta=a b$. By Proposition~\ref{pro:Wirtinger}, this system is a frame whenever $\delta < \delta_{\mathcal{D}_b g}$. As $\mathcal{D}_{1/b} \widehat{g}(x-k) = \sqrt{b} \, \widehat{g}(b(x-k))$, the claim follows.
\end{proof}

\subsection{Hermite functions and Laguerre polynomials}
In what follows, our arguments heavily rely on the properties of (generalized) Laguerre polynomials:
\begin{equation}
    \mathcal{L}^{\alpha}_n(x) = \sum_{k=0}^n (-1)^k \binom{n + \alpha}{n - k} \frac{x^k}{k!}.
\end{equation}
Below, we will deal with Laguerre polynomials with $\alpha\in \{ -1/{2},0,1/{2}\}.$

They are related to the Hermite functions via the short-time Fourier transforms (see \cite{Fol89})
\begin{equation}\label{eq:STFT_hermite_laguerre}
    V_{h_n}h_n(x,\xi)
    =
    e^{-\pi i x\xi}
    e^{-\frac{\pi}{2}(x^2+\xi^2)}
    \mathcal{L}_n^0\left(\pi(x^2+\xi^2)\right),
    \qquad (x,\xi)\in\R^2.
\end{equation}
as well as the following identity \cite[eq.~22.5.38, eq.~22.5.39]{AbramowitzStegun1964} that depends on the parity of $n$;
\begin{equation}\label{eq:h_n-L_n}
    h_n(t) = C_n e^{-\pi t^2} 
    \begin{cases}
        \mathcal{L}_{n/2}^{-1/2}(2\pi t^2), &\quad n\in 2\N_0,\\
        \sqrt{2\pi} t \mathcal{L}_{(n-1)/2}^{1/2}(2\pi t^2), &\quad  n\in2\N_0+1,
    \end{cases}
\end{equation}
where $C_n$ depends only on $n$.

\subsection{Upper bounds on Laguerre polynomials}\label{sec:upper_bounds_for_laguerre}

The upper bounds for Laguerre polynomials have been intensively investigated by many researchers; see \cite{MR2168916,MR1662719,MR1424132,Sze39}. A classical estimate for Laguerre polynomials is the \textit{Szeg\H{o} bound} \cite[eq.~(7.21.3)]{Sze39}, which gives for all $n \in \N$ and $\alpha=0$
\begin{equation}\label{eq:Szego}
    |\mathcal{L}^0_n(x)| \leq e^{x/2}, \quad x \geq 0.
\end{equation}
In our proofs, we need more accurate bounds for $\mathcal{L}_n^{\alpha}(x)$ that depend on the position of the point $x$. It is well-known, see e.g. \cite{MR1036513}, that the roots of the Laguerre polynomial $\mathcal{L}^{\alpha}_n$ are contained in the interval $[q, s]$, where 
\begin{equation}
    q(\alpha) := \left(\sqrt{n + \alpha + 1} - \sqrt{n}\right)^2, \quad s(\alpha) := \left(\sqrt{n + \alpha + 1} + \sqrt{n}\right)^2.
\end{equation}
Observe that, for any $n \in \N$ and $\alpha\in\{-1/2,0,1/2\}$,
\begin{equation}
    s(\alpha) = 2n+1+\alpha+2\sqrt{n(n+\alpha+1)}\leq 4n+2\alpha+2, 
    \quad \text{ and } \quad
    q(\alpha)=\frac{(\alpha+1)^2}{s} \leq \frac{(\alpha+1)^2}{4n}.
\end{equation}

For $\alpha = 0$ we set $q:=q(0)$ and $s:=s(0).$
Note that
\begin{equation}\label{eq:q+s=nu}
    q + s = (\sqrt{n+1}-\sqrt{n})^2 + (\sqrt{n+1}+\sqrt{n})^2 = 4n+2 =: \nu.
\end{equation}
Moreover, if $\alpha = 0$, we note the following estimates
\begin{equation}
    s\leq 4n+2 = \nu,
    \quad \text{ and } \quad
    q\leq \frac{1}{4n},
\end{equation}
which we will use in the sequel. The interval $[q,s]$ is typically referred to as the oscillatory region for the $n$-th Laguerre polynomial. Sometimes, this refers to the slightly larger region $[q, \nu]\supseteq [q,s]$, $\nu=4n+2$. The analysis and estimates of $|\mathcal{L}^{0}_n(x)|e^{-x/2}$ are thus often split into four areas, depicted in Figure~\ref{fig:Laguerre_regions}, which we refer to as
\begin{enumerate}
    \item the region \textit{near the origin},
    \item the \textit{oscillatory region} $[q,s]$,
    \item the \textit{transition region} around $s$,
    \item the \textit{asymptotic region}, which starts after $s$, thus after the last root.
\end{enumerate}
We also refer to, e.g., \cite{HighTransFcts1981, Tricomi1949} at this point.

\begin{figure}[h!tp]
    \begin{framed}
    \centering
    \includegraphics[width=0.975\linewidth]{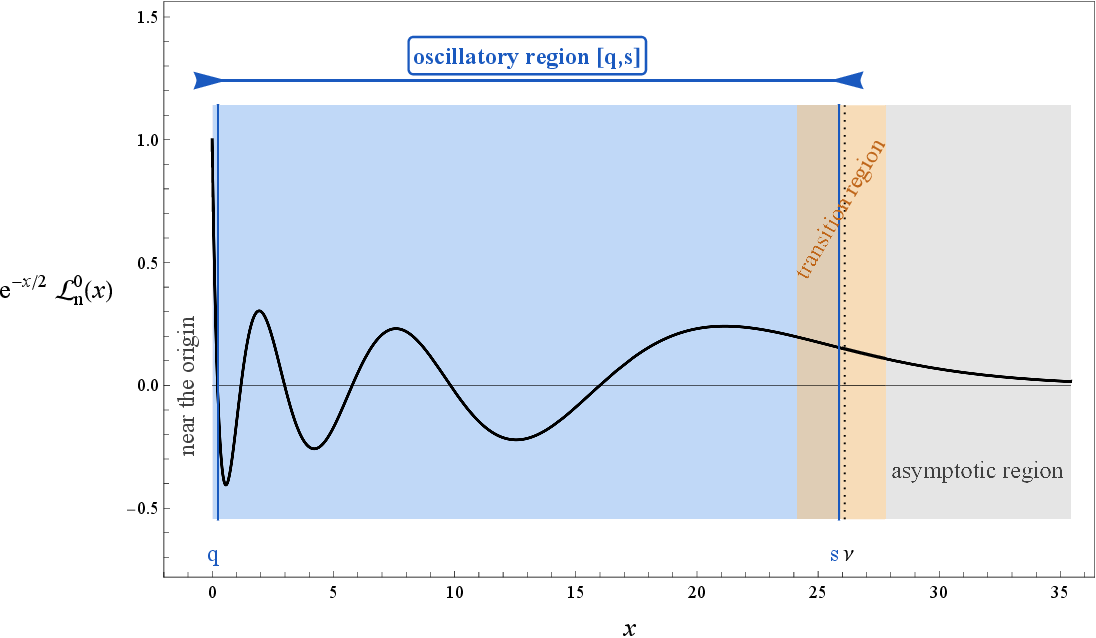}
    \caption{The four different regions for the sixth Laguerre polynomial. For the illustration, we use the function $\mathcal{L}^0_6(x) e^{-x/2}$. For $n=6$, the values of $q$ and $s$ are approximately $q=0.0385$ and $s=25.9615$, and $\nu = 26$. All roots are contained in the oscillatory region.}
    \label{fig:Laguerre_regions}
    \end{framed}
\end{figure}

In Section~3, we restrict to the case $\alpha=0$ and do not evaluate the Laguerre polynomial on $(0,q)$, see \eqref{eq:min_point}, so we do not require an estimate near the origin.
Our argument relies on the following list of estimates for $x\geq q$:
\begin{equation}\label{eq:estimate}
    |\mathcal{L}^{0}_n(x)|e^{-x/2}\leq \begin{cases}
        \sqrt{\frac{s-q}{(s-x)(x-q)}}, &\quad  q\leq x\leq s\\
        C n^{-1/3} &\quad  \nu -\big(\tfrac{4\nu}{3}\big)^{1/3} \leq x\leq \nu +\big(\tfrac{4\nu}{3}\big)^{1/3} \\
        C x^{-1/2} n^{1/6} e^{-R(s,x)/2}, &\quad  x\geq s.
    \end{cases}
\end{equation}
Note that the cases are not distinct. Throughout the manuscript, $C > 0$ will denote a generic constant, independent of $x$ and $n$. Moreover, we denote by $R(s,x)$  the following integral
\begin{equation}
    R(s, x) = \int\limits_{s}^{x} \frac{\sqrt{(q-t)(s-t)}}{t} dt.
\end{equation}
The first estimate can be found in \cite[Thm.~3]{MR2168916}, the second one is \cite[Eq.~40]{Tricomi1949}, see also \cite[Sec.~10.15.]{HighTransFcts1981}, and the last one follows from \cite[Thm.~2]{MR2287374} and \cite[Thm.~5]{MR2287374}.

The estimates in the transition region overlap with the other two:
\begin{equation}
     \nu -\big(\tfrac{4\nu}{3}\big)^{1/3} \leq s \leq \nu +\big(\tfrac{4\nu}{3}\big)^{1/3}
\end{equation}
is equivalent to 
\begin{equation}
    q = \nu - s \leq \big(\tfrac{4\nu}{3}\big)^{1/3}.
\end{equation}
Since $q\leq \tfrac{1}{4n}<1<\nu$, this is true for all $n\in\N$, and provides us more flexibility in our estimates.

Finally, for $\alpha=0, \pm \tfrac{1}{2}$ and sufficiently large $x$, we will use the simple bound:
\begin{equation}\label{eq:x_n_n_factorial_bound}
    |\mathcal{L}^{\alpha}_n(x)| \leq \frac{x^n}{n!}, \quad x>s(\alpha),
\end{equation}
which follows from the product formula $|\mathcal{L}^{\alpha}_n(x)| = \frac{1}{n!}\prod\limits_{k=1}^n|x-x_k|$, where by $x_k$ we denoted the zeros of the Laguerre polynomial.

\subsection{Technical lemma on series estimate}

Below, we will employ the following estimate on the tail of the series with terms of Gaussian decay. This is a variation of a lemma that was kindly suggested to us by the referee of \cite{MR4887696}.
\begin{lemma}\label{lem:tail_estimate}
    Let $ r, M > 0$ and $\Phi:[M,+\infty)$ be differentiable and convex.  If
    \begin{equation}\label{eq:gamma_def}
        \gamma:=\frac{r}{M} -\Phi'(M)<0,
    \end{equation}
    then for every $T \ge M$, the inequality
    \begin{equation}\label{eq:tail_ser_estimate}
        \sum\limits_{l = 0}^{\infty} (T+l)^r e^{-\Phi(T+l)} \leq \frac{M^r e^{-\Phi( M)}}{1-e^{\gamma}}
    \end{equation}
    holds.
\end{lemma}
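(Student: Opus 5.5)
The plan is to compare the series with a geometric series. Convexity supplies a linear lower bound for $\Phi$, and the ratio of consecutive comparison terms will be at most $e^{\gamma}<1$. Set $\psi(t):=r\log t-\Phi(t)$ on $[M,\infty)$, so that the summands are $e^{\psi(T+l)}$. The function $\psi$ is concave, since $r\log t$ is concave and $-\Phi$ is concave. Therefore its derivative $\psi'(t)=r/t-\Phi'(t)$ is nonincreasing, and for all $t\ge M$
\begin{equation}
    \psi'(t)\le \psi'(M)=\frac{r}{M}-\Phi'(M)=\gamma<0.
\end{equation}
Here I would note that $\Phi'$ is monotone nondecreasing by convexity and differentiability, so no second derivative is needed. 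Concavity of $\psi$ also gives, for $t\ge M$,
\begin{equation}
    \psi(t)\le \psi(M)+\psi'(M)(t-M)=\psi(M)+\gamma(t-M).
\end{equation}
Alternatively, this follows from the mean value theorem together with the bound $\psi'\le\gamma$ on $[M,t]$.

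Next I apply this bound at $t=T+l$ with $T\ge M$. Since $T+l-M\ge l$ and $\gamma<0$, we get
\begin{equation}
    (T+l)^r e^{-\Phi(T+l)}=e^{\psi(T+l)}\le e^{\psi(M)}e^{\gamma(T+l-M)}\le M^r e^{-\Phi(M)}e^{\gamma l}.
\end{equation}
Summing the geometric series $\sum_{l\ge0}e^{\gamma l}=(1-e^{\gamma})^{-1}$, which converges because $\gamma<0$, yields \eqref{eq:tail_ser_estimate}.

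There is no serious obstacle. The only point needing care is the justification of $\psi'\le\gamma$ on the whole half-line, which comes from monotonicity of $\Phi'$ for a differentiable convex function and monotonicity of $r/t$. I would also remark explicitly that I read $\Phi$ as real-valued, i.e. $\Phi:[M,\infty)\to\R$. At the endpoint $M$ I interpret $\Phi'(M)$ as the one-sided derivative, which convexity handles equally well.
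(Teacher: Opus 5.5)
Your proof is correct and is essentially the paper's argument. The paper bounds the ratio $a_{l+1}/a_l\le e^{r/(T+l)-\Phi'(T+l)}\le e^{\gamma}$ using convexity and $1+x\le e^{x}$, sums a geometric series, and separately asserts that $t^re^{-\Phi(t)}$ is decreasing on $[M,\infty)$ to get $a_0\le M^re^{-\Phi(M)}$; your single tangent-line bound for the concave function $\psi(t)=r\log t-\Phi(t)$ does both steps at once and also justifies the monotonicity claim the paper leaves as ``one can check''.
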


\begin{proof}
We set
$$
a_l:=(T+l)^{r}e^{-\Phi(T+l)},\quad l\geq 0.
$$
Observe that convexity of $\Phi$ yields
\begin{equation}
    \Phi(T+l+1) - \Phi(T+l) \geq \Phi'(T+l) \geq \Phi'(M).
\end{equation}
Using this inequality and ~\eqref{eq:gamma_def}, for every non-negative integer $l$, we obtain
\begin{equation}
\frac{a_{l+1}}{a_l} = \left(1+\frac{1}{T+l}\right)^{r} 
e^{\Phi(T+l) - \Phi(T+l+1)}
\leq e^{\frac{r}{T+l} - \Phi'(M)} \leq e^{\gamma}.
\end{equation}
This leads to a geometric series estimate
\begin{equation}\label{eq:lemma_tail_prelim}
        \sum\limits_{l=0}^{\infty} (T+l)^r e^{-\Phi(T+l) } = \sum\limits_{l=0}^{\infty} a_l \leq a_0 \sum\limits_{l=0}^{\infty} e^{\gamma l}  \leq \frac{a_0}{1-e^{\gamma}}.
\end{equation}
Moreover, one can check that the convexity of $\Phi$ and condition \eqref{eq:gamma_def} imply that the function $t^r e^{-\Phi( t)}$ is decreasing for $t \geq M$. Hence, $a_0 \leq M^r e^{-\Phi(M)}$. Plugging this into \eqref{eq:lemma_tail_prelim}, we obtain \eqref{eq:tail_ser_estimate}.
\end{proof}

\subsection{Lattice points in ellipses}

In what follows, we will need to estimate the number of lattice points inside certain regions on the plane. This is a rich theory with a number of beautiful results, however, for our purposes we need the following simple lemma.

\begin{lemma}\label{lem:ellipse_lattice_count}
Let $0< a \leq b$ and $r\geq0$. Define an ellipse
    \begin{equation}
        E_r = \left \{(x,y)\in \R^2 :\frac{x^2}{b^2}+\frac{y^2}{a^2}\leq \frac{r}{\pi} \right \}.
    \end{equation}
There exists an absolute constant $C>0$ such that for every $0 \leq r_1 \leq r_2$ we have
\begin{equation}\label{eq:ellipse_difference}
     \#\left(( E_{r_2} \setminus E_{r_1})\cap \Z^2 \right) \leq ab(r_2-r_1) + C(a\sqrt{r_2} + b \sqrt{r_2-r_1} + 1).
\end{equation}
\end{lemma}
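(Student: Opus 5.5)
We count lattice points $(k,l)\in\Z^2$ with $\frac{r_1}{\pi}<\frac{k^2}{b^2}+\frac{l^2}{a^2}\le\frac{r_2}{\pi}$ by slicing along the horizontal lines $y=l$. Since $a\le b$, the ellipse is shortest in the $y$-direction, so there are few rows: the admissible $l$ satisfy $|l|\le a\sqrt{r_2/\pi}$, giving at most $2a\sqrt{r_2/\pi}+1$ rows. This is the source of the term $Ca\sqrt{r_2}$.

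Fix such an $l$ and set $u=l^2/a^2$. The $x$-section of $E_{r_2}\setminus E_{r_1}$ is
\[
\{x: b^2(r_1/\pi-u)<x^2\le b^2(r_2/\pi-u)\}.
\]
This is either one interval of length $2b\sqrt{(r_2/\pi-u)_+}$ (when $u\ge r_1/\pi$), or two intervals of total length
\[
2b\bigl(\sqrt{r_2/\pi-u}-\sqrt{r_1/\pi-u}\bigr).
\]
An interval of length $L$ contains at most $L+1$ integers. Hence row $l$ contributes at most $\ell(l)+2$ points, where $\ell(l)$ is the one-dimensional measure of the section. Summing the $+2$ over the rows gives $O(a\sqrt{r_2}+1)$.

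It remains to compare $\sum_l \ell(l)$ with the area, which is exactly $\pi\cdot ab\cdot(r_2-r_1)/\pi=ab(r_2-r_1)$. Write $\ell(l)=f(l)$ with
\[
f(y)=2b\bigl(\sqrt{(r_2/\pi-y^2/a^2)_+}-\sqrt{(r_1/\pi-y^2/a^2)_+}\bigr).
\]
This $f$ is even, continuous, and its integral over $\R$ is the area. We compare $\sum_{l\in\Z}f(l)$ with $\int_\R f$. For a function that is piecewise monotone with boundedly many monotone pieces, $|\sum_l f(l)-\int f|\le C\sup f$. Here $f$ is nondecreasing on $[0,a\sqrt{r_1/\pi}]$ and nonincreasing afterwards, so it has four monotone pieces on $\R$.

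The maximum of $f$ is attained at $|y|=a\sqrt{r_1/\pi}$:
\[
\sup f=2b\sqrt{(r_2-r_1)/\pi}.
\]
This yields the term $Cb\sqrt{r_2-r_1}$. Combining the three contributions gives
\[
\#\le ab(r_2-r_1)+C\bigl(a\sqrt{r_2}+b\sqrt{r_2-r_1}+1\bigr).
\]

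The main obstacle is this last step: getting the error $b\sqrt{r_2-r_1}$ rather than the naive $b\sqrt{r_2}$. A row-by-row bound $\ell(l)\le 2b\sqrt{r_2}$ would be too crude. The remedy is to notice that the difference of square roots is maximized at the inner boundary, and to use the monotone-piece Riemann-sum comparison, whose error depends only on $\sup f$. If $r_1=0$, the same argument applies with $f$ unimodal. The constant $C$ is absolute, for example $C=10$ after absorbing $\pi$.
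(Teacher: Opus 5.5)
Your proposal is correct and follows essentially the same route as the paper: you slice along the rows $y=l$, bound each row by its section length plus $2$, and compare $\sum_l f(l)$ with the area $ab(r_2-r_1)$, using that $f$ increases on $[0,a\sqrt{r_1/\pi}]$ and decreases afterwards. The paper phrases this last step through the total variation of $f$, bounded by $4f(a\sqrt{r_1/\pi})\le 8b\sqrt{(r_2-r_1)/\pi}$, which is exactly your piecewise-monotone error bound $C\sup f$.
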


\begin{remark}
    In particular, from our lemma, one can deduce
\begin{equation}\label{eq:ellipse}
     \#\left( E_{r}\cap \Z^2 \right) \leq abr + C((a+b)\sqrt{r} + 1).
\end{equation}
We mention that for the lattice points in one ellipse better estimates are available, see e.g.~\cite{Nosarzewska1948}.
\end{remark}

\begin{proof}
Given $r\geq0$ denote by $I_r:=[-a\sqrt{\frac{r}{\pi}}, a\sqrt{\frac{r}{\pi}}]$.
    For  $l \in I_r$ the length of the segment on the level $l$ inside the ellipse $E_{r}$ is given by the function $w_r$ that is $0$ outside $I_r$ and
    \begin{equation}
        w_r(l):= 2b\sqrt{\frac{r}{\pi}-\frac{l^2}{a^2}}, \qquad \text{ for } l\in I_r.
    \end{equation}
Now, we fix $l_0 \in I_{r_2} \cap \Z$ and estimate the number of points from $\Z^2$ inside the ellipse $E_{r_2}$ and outside the ellipse $E_{r_1}$ on the level $l_0$ by
    \begin{equation}
        \#\left\{(k,l_0) \in  ( E_{r_2} \setminus E_{r_1}) \cap \Z^2 \right\} \leq w_{r_2}(l_0) - w_{r_1}(l_0) + 2. 
    \end{equation}
Summing over all admissible integer layers, we get
    \begin{equation}
        \#( (E_{r_2} \setminus E_{r_1}) \cap \Z^2 ) \leq \sum\limits_{l\in \Z \cap I_{r_2}} (w_{r_2}(l) - w_{r_1}(l) + 2). 
    \end{equation}
For brevity, set $\sigma(s):=w_{r_2}(s) - w_{r_1}(s).$
Since $\sigma$ is a function of bounded variation, we have
\begin{equation}
        \sum\limits_{l\in \Z \cap I_{r_2}} (w_{r_2}(l) - w_{r_1}(l)) \leq 
        \int\limits_{I_{r_2}} \sigma(s) ds + \Var\limits_{I_{r_2}} \sigma \leq \mathrm{area}(E_{r_2}) - \mathrm{area}(E_{r_1}) +  \Var\limits_{I_{r_2}} \sigma .
\end{equation}
Using $ \mathrm{area}(E_{r}) = ab r$ and $\#(\Z \cap I_{r_2}) \leq 2 \left(a\sqrt{\frac{r_2}{\pi}} + 1\right)$, we arrive at 
    \begin{equation}\label{eq:E_r_var}
        \#( (E_{r_2} \setminus E_{r_1}) \cap \Z^2 ) \leq ab(r_2-r_1) + 4(a\sqrt{r_2} +1) +  \Var\limits_{I_{r_2}} \sigma. 
    \end{equation}
It remains to estimate the variation of the function $\sigma.$ By symmetry, we may consider only positive half of the $I_{r_2}.$
On the segment $s \in [a\sqrt{r_1/\pi},a\sqrt{r_2/\pi}]$ we see that $\sigma$ is decreasing, since $\sigma(s) = w_{r_2}(s)$. On the other hand, since
$w_{r_2}(s) \geq w_{r_1}(s)$ for all $s \in I_{r_2},$ we have
$$
\sigma'(s) = \frac{4b^2s}{a^2}\left(\frac{1}{w_{r_1}(s)} - \frac{1}{w_{r_2}(s)} \right) \geq 0, \qquad  0\leq s< a\sqrt{r_1/\pi}.
$$
Hence, $\sigma$ is increasing on $[0,a\sqrt{r_1/\pi}].$
Thus,
\begin{equation}
    \Var\limits_{I_{r_2}} \sigma \leq  4\sigma \left(a\sqrt{\frac{r_1}{\pi}} \right) = 4w_{r_2}\left(a\sqrt{\frac{r_1}{\pi}} \right) \leq 8b\sqrt{\frac{r_2-r_1}{\pi}}.
\end{equation}
Together with~\eqref{eq:E_r_var}, this estimate implies~\eqref{eq:ellipse_difference}, which finishes the proof of the lemma.
\end{proof}

\section{Near the diagonal}

In this section, we consider the regime in which neither lattice parameter is too small.
Our main result is the following statement.

\begin{theorem}\label{thm:Janssen_test}
    Let $0< \delta < \eta < \frac{1}{3}$. Then there exists $n_{\eta,\delta}\in\N$ such that, for every $n\geq n_{\eta,\delta}$, the region
    \begin{equation}\label{eq:diagonal_safety_region}
           S_n(\delta,\eta) := \left\{ (a,b) \in \R^2_+ : \min\{a,b\}\geq n^{-\frac12-\delta}, ab \leq n^{-\frac{2}{3}-\eta} \right\} 
        \end{equation}
is contained in the frame set of the $n$-th Hermite function.
\end{theorem}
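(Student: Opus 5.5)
The plan is to verify the Janssen test, Proposition~\ref{pro:Janssen}, for $\Lambda=a\Z\times b\Z$. By \eqref{eq:STFT_hermite_laguerre}, and since $\|h_n\|_2=1$ gives $|V_{h_n}h_n(0)|=1$, it suffices to show
\begin{equation}
\sum_{(k,l)\in\Z^2\setminus\{0\}} e^{-x_{k,l}/2}\,|\mathcal{L}^0_n(x_{k,l})| < 1,
\qquad x_{k,l}:=\pi\Big(\frac{k^2}{b^2}+\frac{l^2}{a^2}\Big).
\end{equation}
By the symmetry $a\leftrightarrow b$ we may assume $a\le b$. The smallest nonzero value is $x_{\min}\ge \pi/b^2$. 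On $S_n(\delta,\eta)$ we have $b=ab/a\le n^{-2/3-\eta}n^{1/2+\delta}=n^{-1/6-(\eta-\delta)}$. Hence $x_{\min}\ge \pi n^{1/3+2(\eta-\delta)}\gg q$, so the region near the origin is never needed. The level sets $\{x_{k,l}\le r\}$ are exactly the ellipses $E_r$ of Lemma~\ref{lem:ellipse_lattice_count}. I will split the sum according to which case of \eqref{eq:estimate} applies.

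\textbf{Oscillatory part, $x_{\min}\le x\le s-(4\nu/3)^{1/3}$.} Here I use $e^{-x/2}|\mathcal{L}^0_n(x)|\lesssim \sqrt{s}/\sqrt{(s-x)x}$. Decompose $[x_{\min},s]$ dyadically, both near $x_{\min}$ in $x$ and near $s$ in $s-x$. Count the points in each shell $E_{r_2}\setminus E_{r_1}$ with Lemma~\ref{lem:ellipse_lattice_count}: the count is at most $ab(r_2-r_1)+C(a\sqrt{r_2}+b\sqrt{r_2-r_1}+1)$. The area term sums to roughly $ab\int_0^{s}\sqrt{s}/\sqrt{(s-x)x}\,dx\asymp ab\sqrt n\cdot\pi$, which is $\lesssim n^{-1/6-\eta}\to0$. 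The boundary terms are weighted by $\sup$ of the bound on the shell. Near $x_{\min}$ they give about $a\sqrt{r}\cdot r^{-1/2}\lesssim a$ per shell, times $\log n$ shells, and $a\le b\to0$. Near $s$ they give about $a\sqrt n\cdot (s-x)^{-1/2}$ summed dyadically down to $s-x\approx n^{1/3}$, so about $a\sqrt n\, n^{-1/6}=a n^{1/3}$. This is the delicate point. To control it I would use the fact that the relevant lattice points near the boundary come from the $k$-direction with spacing $1/b$, and bound them by $b\sqrt{r_2-r_1}$ rather than $a\sqrt{r_2}$. Alternatively, I would exploit that $a\le\sqrt{ab}\le n^{-1/3-\eta/2}$ on this region, which gives $a n^{1/3}\le n^{-\eta/2}\to0$. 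The $b\sqrt{r_2-r_1}$ terms give $b\sqrt{n}^{0}$-type contributions of size $b\cdot\sqrt{s-x}\cdot\sqrt{s}/\sqrt{(s-x)s}=b\to0$ per shell, with logarithmically many shells, which is acceptable since $b\le n^{-1/6-(\eta-\delta)}$. The constant term $C$ per shell is weighted by at most $x_{\min}^{-1/2}$ or $n^{-1/3}$, which is also fine.

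\textbf{Transition part, $|x-\nu|\le(4\nu/3)^{1/3}$.} Here the bound is $Cn^{-1/3}$. The number of points, by Lemma~\ref{lem:ellipse_lattice_count}, is at most $ab\,n^{1/3}+C(a\sqrt n+b n^{1/6}+1)$. Multiplying by $n^{-1/3}$ gives $ab+a n^{1/6}+b n^{-1/6}+n^{-1/3}$, and all of these tend to $0$.

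\textbf{Asymptotic part, $x\ge \nu+(4\nu/3)^{1/3}$.} I use $Cx^{-1/2}n^{1/6}e^{-R(s,x)/2}$. Just beyond $s$ one has $R(s,x)\gtrsim (x-s)^{3/2}/\sqrt n$, and eventually $R$ grows linearly. Group the points into shells of width $n^{1/3}$, with the $j$-th shell starting at $s+jn^{1/3}$, so that $R\gtrsim j^{3/2}$ there. Each shell has count about $ab\,n^{1/3}+a\sqrt{x}+bn^{1/6}+1$, and $x^{-1/2}n^{1/6}\approx n^{-1/3}$ near the start. Summing over $j$ gives the same small quantity times $\sum_j e^{-cj^{3/2}}$. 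For very large $x$ (say $x\ge Kn$), I switch either to the convexity tail estimate of Lemma~\ref{lem:tail_estimate}, applied in the radial variable with $\Phi$ built from $R$, or to the Szeg\H{o}-type bound together with $R(s,x)\gtrsim x$. The Gaussian-type decay kills the polynomially growing counts.

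Adding the three parts shows the total is $o(1)<1$ for $n\ge n_{\eta,\delta}$. I expect the main obstacle to be the boundary term near the turning point $x\approx s$ in the oscillatory region. There the $(s-x)^{-1/2}$ singularity is integrated against the ellipse perimeter error. The proof must use that $a\le\sqrt{ab}$ and the exponent $-2/3-\eta$ to beat $n^{1/3}$, and this is exactly where the threshold $2/3$ comes from.
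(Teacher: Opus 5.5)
Your proposal is correct and is essentially the paper's proof: the Janssen test, the envelopes in \eqref{eq:estimate}, shell counting with Lemma~\ref{lem:ellipse_lattice_count}, and a dyadic decomposition in $s-x$ near the turning point. There the perimeter contribution $a\sqrt{n}\sum_j(2^jn^{1/3})^{-1/2}\asymp a\,n^{1/3}$ is killed by $a\le\sqrt{ab}\le n^{-1/3-\eta/2}$, exactly as in the paper's $\Sigma_2$. (Your first option, trading the $a\sqrt{r_2}$ term for $b\sqrt{r_2-r_1}$, is not supported by Lemma~\ref{lem:ellipse_lattice_count}, which allows up to two boundary points on every horizontal layer, so rely on the second option.) The remaining differences are bookkeeping: you use dyadic shells over the whole oscillatory range instead of the paper's $K_1,K_2$ with auxiliary exponents $p,t$, and width-$n^{1/3}$ shells with $R(s,x)\gtrsim j^{3/2}$ beyond $s$, followed by the third envelope with $R(s,x)\gtrsim x$ for $x\ge Kn$ (the Szeg\H{o} bound alone is not summable), where the paper uses a uniform $n^{-1/3}$ bound on $K_5$ and a separate $x^n/n!$--Stirling tail for $X_{k,l}\ge 6n$.
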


The proof of this theorem will be achieved by combining two propositions formulated below. 

We start with a brief description of the proof. Our plan is to apply Janssen's test. 
It will be convenient in the sequel to introduce the following object:
\begin{equation}\label{eq:X_k,l}
    X_{k,l}(a,b)
    =\pi\left(\frac{k^2}{b^2}+\frac{l^2}{a^2}\right),
    \qquad a,b>0,\quad (k,l)\in\Z^2.
\end{equation}
Using Proposition~\ref{pro:Janssen} and~\eqref{eq:STFT_hermite_laguerre}, we reduce the problem to verification that, uniformly for $(a,b)\in S_n(\delta,\eta)$, the sum of values of Laguerre polynomials with exponential weights is bounded by $2$ from above:
\begin{equation}\label{eq:Janssen_sum}
    \sum_{(k,l)\in\Z^2}
    \left|V_{h_n}h_n\left(\frac{k}{b},\frac{l}{a}\right)\right|
    =\sum_{(k,l)\in\Z^2}
    \left|\mathcal L^0_n\left(X_{k,l}(a,b)\right)\right|
    e^{-\frac{X_{k,l}(a,b)}{2}}<2
\end{equation}
for all sufficiently large $n$.

It is natural that for sufficiently large indices, depending on the order $n$ of the Laguerre polynomial, the contributions of the corresponding terms to the series become negligible.

We will split the series into a main part, which contains almost all the mass, and a tiny remainder. In a first step, we will determine a threshold value, depending on $n$, beyond which all contributions are sufficiently small, and in fact tend to zero uniformly throughout the search region. Once this is done, we will estimate the elliptic layers of the lattice, which account for the bulk of the mass, in the second step.

Before passing to the proof, we give simple bounds for $X_{k,l}(a,b).$

\begin{remark}
In what follows, by symmetry of the frame set, without loss of generality, we may assume that $a \leq b$. Under this assumption, for $(a,b) \in  S_n(\delta,\eta)$ we have 
\begin{equation}\label{eq:max_bound_diagonal}
   \max\{a,b\} = b  \leq n^{-\frac{2}{3}-\eta}a^{-1}\leq n^{-\frac{1}{6}+\delta-\eta}.
\end{equation}
\end{remark}

Furthermore, from \eqref{eq:max_bound_diagonal} we get
\begin{equation}\label{eq:Xk_lower_bound}
    X_{k,l}(a,b)=\pi\left(\frac{k^2}{b^2}+\frac{l^2}{a^2}\right) \geq  \frac{\pi(k^2 + l^2)}{(\max\{a,b\})^2} \geq \pi (k^2 + l^2) n^{\frac{1}{3}+2(\eta - \delta)}.
\end{equation}
On the other hand, using~\eqref{eq:diagonal_safety_region}, we obtain
\begin{equation}\label{eq:Xk_upper_bound}
    X_{k,l}(a,b)=\pi\left(\frac{k^2}{b^2}+\frac{l^2}{a^2}\right) \leq  \frac{\pi(k^2 + l^2)}{(\min\{a,b\})^2} \leq \pi (k^2 + l^2) n^{1+2\delta}.
\end{equation}

\subsection{Tails in the Janssen test criterion}

We first prove that the tails of the Janssen test for such a Gabor system are asymptotically negligible.

All estimates below are uniform
in $(a,b)\in S_n(\delta,\eta)$; implicit constants may depend on
$\delta$ and $\eta$, but not on $n$, $a$, or $b$.

\begin{proposition}\label{pro:Janssen_tail}
Let $0< \delta < \eta < \frac{1}{3}$, and let $S_n(\delta,\eta)$ be defined by \eqref{eq:diagonal_safety_region}. Then
    \begin{equation}\label{eq:Janssen_tail_limit}
        \lim_{n\to\infty} \sup_{(a,b)\in S_n(\delta, \eta)}
        \sum_{\substack{(k,l)\in\Z^2\\X_{k,l}(a,b)\geq6n}}
        \left|\mathcal L^0_n\left(X_{k,l}(a,b)\right)\right| e^{-X_{k,l}(a,b)/2} = 0.
    \end{equation}
\end{proposition}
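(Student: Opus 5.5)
Our plan is to control the tail region $X_{k,l}(a,b)\geq 6n$ with a pointwise bound on $|\mathcal L^0_n(x)|e^{-x/2}$ beyond the oscillatory region, and then count lattice points in elliptic shells using Lemma~\ref{lem:ellipse_lattice_count}. Since $6n>\nu+(4\nu/3)^{1/3}$ for large $n$, every such point lies in the asymptotic region $x\geq s$. There we can use either \eqref{eq:x_n_n_factorial_bound} or the third line of \eqref{eq:estimate}.

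The simplest choice is \eqref{eq:x_n_n_factorial_bound}. It gives
\[
|\mathcal L^0_n(x)|e^{-x/2}\le \frac{x^n}{n!}e^{-x/2}=e^{-\Phi(x)},\qquad \Phi(x):=\frac{x}{2}-n\log x+\log n!,
\]
where $\Phi$ is convex. At $x=6n$, Stirling's formula gives
\[
\Phi(6n)=n\bigl(3-\log 6+1\bigr)+O(\log n)=n(4-\log 6)+O(\log n),
\]
and $4-\log 6\approx 2.21>0$. So the bound decays exponentially in $n$. Moreover, $\Phi'(x)=\tfrac12-\tfrac nx\ge \tfrac13$ for $x\ge 6n$, so $e^{-\Phi}$ decays at least like $e^{-x/3}$ from there on. In short, $e^{-\Phi(x)}\le e^{-cn}e^{-(x-6n)/3}$ for $x\ge 6n$, with some $c>0$.

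Next we organize the sum by shells. For $j\ge 0$ let
\[
A_j=\{(k,l): 6n+j\le X_{k,l}(a,b)<6n+j+1\}.
\]
With $a\le b$, we have $X_{k,l}\le r$ exactly when $(k,l)\in E_r$ as in Lemma~\ref{lem:ellipse_lattice_count}. Hence
\[
\#A_j\le ab + C\bigl(a\sqrt{6n+j+1}+b+1\bigr).
\]
By \eqref{eq:max_bound_diagonal} and $ab\le1$, the quantities $a,b$ are bounded, so $\#A_j\le C(\sqrt{n+j}+1)$. The tail sum is therefore at most
\[
\sum_{j\ge0} C\sqrt{n+j}\,e^{-cn}e^{-j/3}\le C\sqrt n\,e^{-cn}\to0,
\]
uniformly on $S_n(\delta,\eta)$. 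Alternatively, we can apply Lemma~\ref{lem:tail_estimate} with $r=\tfrac12$, $M=6n$, and $\gamma=\tfrac{1}{12n}-\tfrac13<0$, which turns the shell sum into a geometric series.

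The one delicate step is checking that $\Phi(6n)$ grows linearly in $n$, i.e. that the threshold $6n$ beats the crude bound $x^n/n!$; the constant $4-\log 6>0$ settles this. If the bound were too weak, we would fall back to the third estimate in \eqref{eq:estimate}, using that $R(s,x)\gtrsim x$ for $x\ge 6n$. Beyond that, the only remaining point is uniformity in $(a,b)$. It holds because the lattice-count bound uses only the boundedness of $a$ and $b$, which \eqref{eq:max_bound_diagonal} guarantees.
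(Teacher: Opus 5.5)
Your argument is correct, apart from an arithmetic slip at exactly the step you call delicate. Since $\log n! = n\log n - n + \frac12\log(2\pi n) + O(1/n)$, you get $\Phi(6n) = 3n - n\log(6n) + \log n! = (2-\log 6)\,n + \frac12\log(2\pi n) + O(1/n)$, not $(4-\log 6)\,n + O(\log n)$. The constant $2-\log 6\approx 0.21$ is still positive. This is the inequality $e^2>6$, which is also what the paper's value $f(n,6n)=-(2-\log 6)n$ rests on. So Stirling's inequality gives $e^{-\Phi(x)}\le (2\pi n)^{-1/2}e^{-(2-\log 6)n}e^{-(x-6n)/3}$ for $x\ge 6n$, and the rest of your argument goes through unchanged. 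The margin is just much thinner than you claimed.

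Your pointwise bound is the same as the paper's, but you organize the summation differently. The paper does not count lattice points in ellipses here. Instead it proceeds in four steps:
\begin{enumerate}
\item It uses the uniform comparison $X_{k,l}(a,b)\ge\pi(k^2+l^2)n^{1-\varepsilon}$ with $\varepsilon=\frac23-2(\eta-\delta)$.
\item It groups terms by $m=k^2+l^2$, using $r_2(m)\le 4\sqrt m$.
\item It bounds the finitely many terms with $m<m_0=\lceil\frac6\pi n^{\varepsilon}\rceil$ crudely by $4m_0^{3/2}e^{f(n,6n)}/\sqrt{2\pi n}$.
\item For $m\ge m_0$ it replaces $X_{k,l}$ by $\pi n^{1-\varepsilon}m$ and applies Lemma~\ref{lem:tail_estimate} with $r=n+\frac12$ and the linear function $\Phi(t)=\frac\pi2 n^{1-\varepsilon}t$, ending with a bound like $(6.1)^{n+3/2}e^{-2n}$.
\end{enumerate}
You instead slice by the value of $X_{k,l}$ into unit-width elliptic shells. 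You count each shell with Lemma~\ref{lem:ellipse_lattice_count}, which gives $O(\sqrt{n+j})$ points since $a\le b\le 1$, and you get geometric decay in $j$ from $\Phi'\ge\frac13$ on $[6n,\infty)$.

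Your route avoids the split at $m_0$ and the bookkeeping of powers $n^{\varepsilon}$. It also makes visible that the only property of $S_n(\delta,\eta)$ the tail needs is boundedness of $a$ and $b$. The paper's route needs only the elementary bound on $r_2(m)$ rather than the ellipse-counting lemma, at the cost of passing through the isotropic comparison between $X_{k,l}$ and $k^2+l^2$.
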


\begin{proof}
    
    From the simple bound~\eqref{eq:x_n_n_factorial_bound}, which is valid outside the oscillatory region, and Stirling's inequality $n!\geq\sqrt{2\pi n} \, (n/e)^n$, we get
    \begin{equation}\label{eq:pointwise_tail_bound}
        \left|\mathcal L^0_n(x)\right|e^{-x/2}
        \leq \frac{x^n}{n!}
        \leq \frac{e^{f(n,x)}}{\sqrt{2 \pi n}},
        \qquad x \geq 6n \,(> 4n+2=\nu, \ n \in \N)
    \end{equation}
    where
    \begin{equation}
        f(n,x)=n\bigl(1-\log n+\log x\bigr)-x/2.
    \end{equation}
    The function $f(n,x)$ is decreasing in $x$ on the given range, since
    \begin{equation}
        \partial_x f(n,x)=n/x-1/2<0,
        \qquad x\geq6n.
    \end{equation}
    Moreover,
    \begin{equation}\label{eq:f_6n}
        f(n,6n)=n(1+\log6)-3n=-(2-\log6)n<0.
    \end{equation}

    We seek to estimate, uniformly for
    $(a,b)\in S_n(\delta, \eta)$,
    \begin{align}\label{eq:remainder_Janssen}
        R_n(a,b)
        &: = \frac{1}{\sqrt{2 \pi n}}
        \hspace{-8pt}
            \sum_{\substack{(k,l)\in\Z^2\\X_{k,l}(a,b)\geq6n}}
        \hspace{-8pt} 
        e^{f(n,X_{k,l}(a,b))}\\
        &=\frac{e^{-n(\log n-1)}}{\sqrt{2 \pi n}}
        \hspace{-12pt}
            \sum_{\substack{(k,l)\in\Z^2\\X_{k,l}(a,b)\geq6n}}
        \hspace{-12pt}
        e^{-X_{k,l}(a,b)/2+n\log(X_{k,l}(a,b))}.
    \end{align}
    Set $\varepsilon= \frac{2}{3} -2(\eta-\delta)$. Note that $0<\varepsilon<\frac{2}{3}.$
    The estimates~\eqref{eq:Xk_lower_bound} and ~\eqref{eq:Xk_upper_bound} together imply that for $(a,b)\in S_n(\delta, \eta)$, we have the uniform bounds
    \begin{equation}\label{eq:X_uniform_bounds}
        \pi (k^2 + l^2) n^{\frac{1}{3}+2(\eta - \delta)} = \pi (k^2 + l^2) n^{1-\varepsilon}
        \leq X_{k,l}(a,b)
        \leq \pi (k^2 + l^2) n^{1+2\delta}.
    \end{equation}
    We switch to spherical coordinates and set $m=k^2+l^2$ and    \begin{equation}\label{eq:m0_definition}
        m_0=\left\lceil\frac{6}{\pi}n^\varepsilon\right\rceil.
    \end{equation}
    We use the notation $r_2(m)$ for the number of ways of writing an integer as a sum of two squares, i.e., $r_2(m) = \#\{(k,l) \in \Z^2 \mid k^2+l^2 = m\}$. This is comfortably upper-bounded by
    \begin{equation}\label{eq:r2}
        r_2(m) \leq 4 \sqrt{m}, \quad m \geq 1,
        \qquad
        r_2(0) = 1.
    \end{equation}
    Next, we note that the $m \geq m_0$ suffices to have $X_{k,l}(a,b) \geq 6n$. Yet, $X_{k,l}(a,b) \geq 6n$ does not by itself imply $m\geq m_0$. We therefore first estimate the finite sum of terms for which $m<m_0$. We define
     \begin{equation}
        F_n := F_n(a,b) := \frac{1}{\sqrt{2 \pi n}}
        \hspace{-8pt}
            \sum_{\substack{(k,l)\in\Z^2\\X_{k,l}(a,b)\geq6n\\k^2+l^2<m_0}}
        \hspace{-12pt}
        e^{f(n,X_{k,l}(a,b))}
    \end{equation}
    Using the monotonicity of $f(n,x)$ on $[6n,\infty)$ and the bound \eqref{eq:r2} on $r_2$, we obtain
    \begin{equation}
        F_n \leq  \frac{e^{f(n,6n)}}{\sqrt{2 \pi n}}  \sum_{m=0}^{m_0-1}r_2(m) \leq  \frac{4m_0^{3/2}}{\sqrt{2 \pi n}} e^{f(n,6n)}.
    \end{equation}
    Thus, independently of $(a,b)$ and therefore uniformly, we have
    \begin{equation}
        F_n \leq \frac{4 m_0^{3/2}}{\sqrt{2 \pi n}} e^{-(2-\log(6))n} \to 0, \quad n \to \infty.
    \end{equation}

    It remains to consider $k^2+l^2=m\geq m_0$. In this range, from~\eqref{eq:m0_definition} and ~\eqref{eq:Xk_lower_bound}, we obtain
    \begin{equation}
       X_{k,l}(a,b)\geq \pi m n^{1-\varepsilon} \geq \pi m_0 n^{1-\varepsilon} \geq 6n.
    \end{equation}
    Moreover, since $f(n,x)$ is decreasing in $x$ for $x \geq 6n$, we also have
    \begin{equation}
        f(n,X_{k,l}(a,b)) \leq f(n, \pi n^{1-\varepsilon}m).
    \end{equation}
    Therefore, for $(a,b) \in S_n(\delta, \eta)$ with $X_{k,l} \geq 6n$, we obtain
    \begin{equation}
        R_n \leq F_n + \frac{1}{\sqrt{2\pi n}} \sum_{m=m_0}^{\infty} r_2(m)\, e^{n\big(1-\log n+\log(\pi n^{1-\varepsilon}m)\big)-\tfrac{\pi}{2}\, n^{1-\varepsilon}m}.
    \end{equation}
    We are now in a situation comparable to the estimates carried out in \cite{MR4887696}, where the case $a=b=1/\sqrt{n}$ was studied. Collecting the terms independent of $m$, and bringing them outside of the sum, yields
    \begin{align}
        R_n 
        & \leq F_n +\frac{1}{\sqrt{2\pi n}} \left(\frac{e\pi}{n^{\varepsilon}}\right)^n \sum_{m=m_0}^{\infty} r_2(m)\, m^n\, e^{-\frac{\pi}{2}\,n^{1-\varepsilon} m}\\
        & \leq F_n + \frac{4}{\sqrt{2 \pi n}} \left(\frac{e\pi}{n^{\varepsilon}}\right)^n \sum_{m=m_0}^{\infty} m^{n+\frac{1}{2}} e^{-A m}, \quad A = A_n(\varepsilon) = \frac{\pi}{2} n^{1-\varepsilon}.
    \end{align}
  To bound from above the latter series, we apply Lemma~\ref{lem:tail_estimate} with $M:=m_0, \, r:=n+\frac{1}{2}$ and $\Phi(t):=A \, t.$ It is plain that ~\eqref{eq:gamma_def} is satisfied:
  \begin{equation}
      \gamma:=\frac{r}{M}- \Phi'(M) = \frac{n+\frac{1}{2}}{m_0} -\frac{\pi}{2}n^{1-\varepsilon} \leq -\frac{\pi}{4}n^{1-\varepsilon} < 0.
  \end{equation}
Therefore, we get
\begin{equation}
    \sum_{m=m_0}^{\infty} m^{n+\frac{1}{2}} e^{-A m} \leq \frac{m_0^{n+\frac{1}{2}} e^{-Am_0}}{1 - e^{\gamma}}. 
\end{equation}
By~\eqref{eq:m0_definition}, we have that $Am_0 \geq 3n$. Combining all the estimates together, we arrive at
    \begin{equation}
        R_n \leq F_n + \frac{4}{\sqrt{2 \pi n}} \left( \frac{e \pi}{n^\varepsilon} \right)^n \frac{\left( \frac{6}{\pi} n^\varepsilon + 1 \right)^{n+\frac{1}{2}} e^{-3n}}{1- e^{-\frac{ \pi}{4} n^{1-\varepsilon}}} =: F_n + T_n(\varepsilon).
    \end{equation}
    To finish the proof of the proposition, it suffices to show that for any fixed $0<\varepsilon < \frac{2}{3}$ we have $T_n(\varepsilon) \to 0$ as $n$ tends to $\infty.$ One can easily check that for sufficiently large $n$ we have $e^{-\frac{\pi}{4} n^{1-\varepsilon}}< \frac{1}{2}$, $\pi n^{-\varepsilon}<0.1,$ and therefore 
    \begin{equation}
        T_n(\varepsilon) \leq   (6.1)^{n+\frac{3}{2}} e^{-2n}.
    \end{equation}
    It remains to note that $e^{2}>7$, hence $T_n(\varepsilon) \to 0$ as $n \to \infty.$
\end{proof}

\subsection{The main contributions}
We already know that the remainder sum ($X_{k,l} \geq 6n$) is tiny if $n$ is large enough and asymptotically vanishes. We now need to make sure that the finite sum of the main contributions does not exceed 2. Indeed, we ask for some margin in order to combine the finite sum and the remainder. We will decompose the lattice into elliptic annuli and use contemporary estimates for Laguerre polynomials on each annulus. They are designed to control the Laguerre polynomial in the oscillatory region, with extra emphasis on the region's endpoints. We also gain additional control outside the oscillatory region, where the asymptotics take over. This then allows us to glue together the finite sum and the remainder generously.

Using the notation $X_{k,l}$ introduced in \eqref{eq:X_k,l}, we seek to bound
\begin{equation}
    \sum_{(k,l)\in K} \left|V_{h_n} h_n \left( \frac{k}{b},\frac{l}{a} \right) \right| = \sum_{(k,l)\in K}|\mathcal{L}^{0}_n (X_{k,l}(a,b))|e^{-\frac{X_{k,l}(a,b)}{2}},
\end{equation}
where $K$ collects all points in $\Z^2$ except for the origin in an ellipse of size $10n$.

\begin{proposition}\label{pro:Janssen_main}
    Let $0< \delta < \eta < \frac{1}{3}$, and $S_n(\delta,\eta)$ be defined by \eqref{eq:diagonal_safety_region}. Let $K$ denote the set
    \begin{equation}\label{eq:K_set_def}
        K := K(a,b):= \left\{(k,l)\in\Z^2\setminus\{(0,0)\}:\pi \left( \frac{k^2}{b^2}+\frac{l^2}{a^2} \right) \leq 10n\right\}.
    \end{equation}
    Then
    \begin{equation}
        \lim\limits_{n\to\infty} \sup\limits_{(a,b) \in S_n(\delta, \eta)} \sum_{(k,l)\in K} |\mathcal{L}^{0}_n(X_{k,l}(a,b))| e^{-\frac{X_{k,l}(a,b)}{2}} = 0.
    \end{equation}
\end{proposition}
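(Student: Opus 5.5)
The plan is to control the sum by sorting lattice points into layers according to the value of $x=X_{k,l}(a,b)$, counting the points in each layer with Lemma~\ref{lem:ellipse_lattice_count}, and bounding the weight $|\mathcal L^0_n(x)|e^{-x/2}$ on each layer with the matching case of \eqref{eq:estimate}. As in the Remark we assume $a\le b$. The point set $\{(k,l):X_{k,l}(a,b)\le r\}$ is exactly $E_r\cap\Z^2$ in the notation of Lemma~\ref{lem:ellipse_lattice_count}. For $(a,b)\in S_n(\delta,\eta)$ we have
\[
ab\le n^{-\frac23-\eta},\qquad a\le\sqrt{ab}\le n^{-\frac13-\frac\eta2},\qquad b\le n^{-\frac16+\delta-\eta}.
\]
Moreover, by \eqref{eq:Xk_lower_bound}, every $(k,l)\in K$ satisfies $X_{k,l}\ge x_*:=\pi n^{\frac13+2(\eta-\delta)}\gg q$. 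So no estimate near the origin is needed, and we only use $x\in[x_*,10n]$.

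\emph{Step 1 (left part of the oscillatory region).} Take $x\in[x_*,s/2]$. There $\sqrt{(s-q)/((s-x)(x-q))}\le C x^{-1/2}$. Split this range into dyadic layers $x\in[2^j,2^{j+1})$. Lemma~\ref{lem:ellipse_lattice_count} with $r_1=2^j$, $r_2=2^{j+1}$ gives at most $ab\,2^j+C\bigl((a+b)2^{j/2}+1\bigr)$ points per layer, and each weight is at most $C2^{-j/2}$. Summing over $2^j\ge x_*/2$, $2^j\le n$ yields
\[
C\Bigl(ab\sqrt n+(a+b)\log n+x_*^{-1/2}\Bigr),
\]
and each term tends to $0$ uniformly. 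The decisive point is that the "$+1$" in the count is paid for by $x_*^{-1/2}\to0$.

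\emph{Step 2 (right part of the oscillatory region and transition region).} Take $x\in[s/2,\,s-\tau]$ with $\tau:=(4\nu/3)^{1/3}\asymp n^{1/3}$. Use dyadic layers in the distance to $s$, namely $s-x\in[2^j,2^{j+1})$. The weight there is at most $C\sqrt{n/(s\,2^j)}\le C2^{-j/2}$. The count comes from Lemma~\ref{lem:ellipse_lattice_count} with $r_2\le s$, giving $ab\,2^j+C\bigl(a\sqrt n+b\,2^{j/2}+1\bigr)$. Summing over $2^j\gtrsim n^{1/3}$ gives
\[
C\Bigl(ab\sqrt n+b\log n+(a\sqrt n+1)n^{-1/6}\Bigr).
\]
Here $a\sqrt n\,n^{-1/6}\le n^{-\eta/2}$, so this also tends to $0$. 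This is where the hypothesis $ab\le n^{-2/3-\eta}$ is used sharply, through the bound on $a$. On the transition window $|x-\nu|\le\tau$ (which contains $s$) the weight is $Cn^{-1/3}$ and the count is $ab\,n^{1/3}+C(a\sqrt n+b\,n^{1/6}+1)$, so the product again tends to $0$.

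\emph{Step 3 (asymptotic region, the main obstacle).} Take $x\in[\nu+\tau,10n]$ and use $Cx^{-1/2}n^{1/6}e^{-R(s,x)/2}$. The difficulty is to obtain a usable lower bound for $R(s,x)$. Since $t-q\ge t/2$ and $t\ge s$ on the range of integration, I expect
\[
R(s,x)\ge c\,\frac{(x-s)^{3/2}}{\sqrt n}\quad\text{for } s\le x\le 2s,\qquad R(s,x)\ge c\,(x-s)\quad\text{for } x\ge 2s.
\]
Use dyadic layers $x-s\in[2^j,2^{j+1})$ with $2^j\gtrsim n^{1/3}$. The weight is $\le Cn^{-1/3}\exp\bigl(-c\,2^{3j/2}/\sqrt n\bigr)$ and the count is $ab\,2^j+C(a\sqrt n+b\,2^{j/2}+1)$. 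The factor $\exp(-c\,2^{3j/2}n^{-1/2})$ makes the sum over $j$ geometric, dominated by the first layer $2^j\asymp n^{1/3}$. That layer contributes the same quantities as the transition window, which tend to $0$. The layers with $x\ge2s$ are exponentially small in $n$.

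Adding Steps 1–3 gives a bound independent of $(a,b)\in S_n(\delta,\eta)$ that tends to $0$, which proves the proposition. The care needed is mainly in Step 3, namely in verifying the lower bound on $R(s,x)$ from its integral definition, and in the bookkeeping at the edge $s-x\asymp n^{1/3}$ in Step 2.
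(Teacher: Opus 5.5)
Your argument is correct. It uses the same ingredients as the paper: sorting points by the value of $X_{k,l}$, counting with Lemma~\ref{lem:ellipse_lattice_count}, and bounding weights by \eqref{eq:estimate}. It organizes the layers differently, though.

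\textbf{How the paper decomposes.} The paper cuts $K$ into five fixed pieces.
\begin{enumerate}
\item The sets $K_1=\{q\le x\le q+n^t\}$ and $K_2=\{q+n^t<x\le s-n^p\}$ are each treated with a single sup-bound on the weight times one crude count. This forces the exponent conditions $t<\tfrac23$ and $p+t>\tfrac53-2(\eta-\delta)$.
\item Only $K_3=\{n^{1/3}\le s-x<n^p\}$ is cut dyadically in $s-x$, exactly as in your Step~2. Your term $a\sqrt n\,n^{-1/6}\le n^{-\eta/2}$ is the paper's $\Sigma_2$.
\item $K_4$ is your transition window.
\item For $x>s$ the paper uses no information on $R(s,x)$ at all. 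The envelope is decreasing, so each weight there is at most $Cs^{-1/2}n^{1/6}\lesssim n^{-1/3}$. Since $\#K\lesssim abn+b\sqrt n+1\lesssim n^{1/3+\delta-\eta}$, the whole region contributes $O(n^{\delta-\eta})$.
\end{enumerate}
So your Step~3 is not really the main obstacle. Your lower bounds on $R(s,x)$ are right: they follow from $\sqrt{(t-q)(t-s)}/t\ge\sqrt{(t-s)/(4s)}$ for $s\le t\le 2s$ and $\ge\tfrac12$ for $t\ge 2s$. But they can be dispensed with.

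\textbf{What each route buys.} Your fully dyadic bookkeeping gives a parameter-free bound of order $n^{-\eta/2}+b\log n$ (with $a\le b$). It uses the lower bound on $\min\{a,b\}$ only through $b\log n\to0$. The paper's cruder blocks, in $K_2$ and for $x>s$, genuinely need $\delta<\eta$. In exchange, the paper's route is shorter at the tail end.

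\textbf{One bookkeeping gap.} The strip $s-\tau<x<\nu-\tau$, of width $q$, lies between your Step~2 range and the transition window $|x-\nu|\le\tau$. Fix it by running Step~2 up to $s-\tau/2$, say; the oscillatory bound still gives weight $\lesssim\tau^{-1/2}$ there. The paper handles the same point by checking $\nu-(4\nu/3)^{1/3}<s-n^{1/3}$.
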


\begin{proof}
    Without loss of generality, we may assume that $a \leq b$, $(a,b) \in S_n(\delta, \eta)$, such that 
    \begin{equation}\label{eq:X_uniform_bounds_reminder}
        \pi (k^2 + l^2) n^{\frac{1}{3}+2(\eta - \delta)} 
        \leq X_{k,l}(a,b)
        \leq \pi (k^2 + l^2) n^{1+2\delta}.
    \end{equation}
    In particular, this implies that all values $ X_{k,l}(a,b)$ with $(k,l) \in K$ satisfy
    \begin{equation}\label{eq:min_point}
        X_{k,l}(a,b) \geq \pi n^{\frac{1}{3}+2(\eta - \delta)}>\frac{1}{4n}\geq q.
    \end{equation}
    In what follows,  for $X_{k,l}(a,b)$ we will use only these bounds independent of $a$ and $b$.
    
    Next, we fix some parameters $p$ and $t$ satisfying conditions
    \begin{equation}\label{eq:p_t_cond}
        0<t<2/3, \,\, \tfrac{1}{3}<p<1,\quad \text{and} \quad     p+t > \frac{5}{3}-2(\eta - \delta).   
    \end{equation}
    Consider the partition $K = \bigcup_{j=1}^5 K_j$ defined in the following way: 
    \begin{align}
        K_1 &= \{(k,l)\in K: q\leq \pi(\tfrac{k^2}{b^2}+\tfrac{l^2}{a^2})\leq q+n^{t}\}; \\
        K_2 &= \{(k,l)\in K: q+n^{t}< \pi(\tfrac{k^2}{b^2}+\tfrac{l^2}{a^2})\leq s-n^{p}\}; \\
        K_3 &= \{(k,l)\in K: s-n^{p}< \pi(\tfrac{k^2}{b^2}+\tfrac{l^2}{a^2})\leq s-n^{1/3}\}; \\
        K_4 &= \{(k,l)\in K: s-n^{1/3} <\pi(\tfrac{k^2}{b^2}+\tfrac{l^2}{a^2})\leq s\}; \\
        K_5 &= \{(k,l)\in K: s <\pi(\tfrac{k^2}{b^2}+\tfrac{l^2}{a^2})\leq {10}n\}.
    \end{align} 
    Note that, since $0<t,p<1$ and $4n \leq s$, we have $q+n^t<s-n^p$ for all sufficiently large $n\in\N$. Moreover, $p>\frac13$ implies $s-n^p<s-n^{1/3}$ and $s \leq 4n+2 < 10n$, $n \in \N$. Thus, the above partition is well-defined (for $n$ sufficiently large) and the sets $K_j$ are pairwise disjoint.
        
    We now estimate the summands within each subset $K_j, \, j=1, \dots,5$ using the estimates for the Laguerre polynomials described in Subsection~\ref{sec:upper_bounds_for_laguerre}. The layer $K_3$ is more delicate than the others, so we first deal with
    \begin{equation}
        J_1:=\sum_{(k,l)\in K_1\cup K_2 \cup K_4\cup K_5} |\mathcal{L}_n^{0}(X_{k,l}(a,b))|e^{-\frac{X_{k,l}(a,b)}{2}}.
    \end{equation}
    
    For the layers $K_1$ and $K_2$, we use the first estimate in \eqref{eq:estimate} and ~\eqref{eq:X_uniform_bounds}.
    To apply the second estimate in \eqref{eq:estimate} for the layer $K_4$, we need to verify $\nu-(\tfrac{4\nu}{3})^{1/3}<s-n^{1/3}$. Recall from \eqref{eq:q+s=nu} that $\nu =s+q$ and, indeed,
      \begin{equation}
          \nu-(\tfrac{4\nu}{3})^{1/3} \leq s+q-(\tfrac{16n}{3})^{1/3} \leq s - n^{1/3} + \frac{1}{4n} +(1-(\tfrac{16}{3})^{1/3})n^{1/3}
          <s-n^{1/3}
        \end{equation}
    for all $n\geq 3$.
    
    The last envelope in \eqref{eq:estimate}, given by $C x^{-1/2} n^{1/6} e^{-R(s,x)/2}$, where $C$ is a constant independent of $x$ and $n$, is strictly monotonically decreasing in $x$ for $x \geq s$.
    Denoting the number of points in $K_j$ by $\# K_j$ we obtain the following estimate:
    \begin{equation}\label{eq:sum_K123}
        \begin{split}
            J_1
            & \leq \quad \#K_1\cdot \sqrt{\frac{s-q}{(s-(q+n^{t}))(\pi n^{\frac{1}{3}+2(\eta -\delta)}-q)}} \\
            & \quad + \#K_2 \cdot \sqrt{\frac{s-q}{(s-(s-n^{p}))((q+n^{t})-q)}}\\
            & \quad +C \left(\#K_4\cdot n^{-1/3} +\# K_5  \cdot s^{-1/2} n^{1/6} e^{-R(s,s)/2}\right) \\
            &= \#K_1 \cdot \sqrt{\frac{s-q}{(s-q-n^{t})(\pi n^{\frac{1}{3}+2(\eta -\delta)}-q)}} 
             + \#K_2 \cdot \sqrt{\frac{s-q}{n^{p+t}}} \\
            & \quad + C \left(\#K_4\cdot n^{-1/3} + \tfrac{1}{2} \cdot \# K_5 \cdot n^{-1/2+1/6}\right).
        \end{split}
    \end{equation}
    We proceed with simplifying this expression. It holds that $s-q = 4\sqrt{n(n+1)}\in (4n,4n+2)$, and, thus we have
    \begin{equation}
        n^{t}= 4n^{t}-3n^{t} \leq 3(4n-n^{t})
        \leq 3(s-q-n^{t}), 
        \quad 0 < t < 1,
    \end{equation}
    and consequently
    \begin{equation}
        \frac{s-q}{s-q-n^t} = 1+\frac{n^t}{s-q-n^t}\leq 4.
    \end{equation}
    It is clear that for sufficiently large $n$ we have $\pi n^{\frac{1}{3}+2(\eta -\delta)}-q>2n^{\frac{1}{3}+2(\eta -\delta)}$. Thus, we can now further estimate $J_1$ by
    \begin{equation}\label{eq:sum_update}
        \begin{split}
          J_1
            & \leq \#K_1 \cdot \sqrt{\tfrac{4}{2 n^{\frac{1}{3}+2(\eta -\delta)}}} 
            + 3\cdot \#K_2\cdot \sqrt{\tfrac{n}{n^{p+t}}} \\
            & \quad + C \left(\#K_4\cdot n^{-1/3} +\tfrac{1}{2}\# K_5 \cdot n^{-1/3}\right) \\
            &\leq 
            \sqrt{2} \cdot \#K_1 \cdot n^{-\frac{1}{6}-(\eta-\delta)} 
            + 3 \cdot \#K_2\cdot n^{(1-p-t)/2}\\
            & \quad + C \left(\#K_4\cdot n^{-1/3} +\tfrac{1}{2}\# K_5 \cdot n^{-1/3}\right).
        \end{split}
    \end{equation}
    To obtain an upper bound for $J_1$, it remains to estimate $\#K_j$, $j=1,2,4,5$. To this end, we use Lemma~\ref{lem:ellipse_lattice_count}. For $j=2$ and $j=5$ we use~\eqref{eq:ellipse} and the rough estimate  
    \begin{align}\label{eq:K25ab}
        \# K_j
         \leq \#K \leq \#(E_{10n}\cap \Z^2)
         \leq C_j (ab \, n + b \, n^{1/2} + 1),
    \end{align}
    where $C_j$, $j=2,5$, are universal constants, independent of $a, \, b, \, n$.
    Next, the same Lemma, combined with basic estimates, gives us
    \begin{equation}\label{eq:K1ab}
        \# K_1 \leq \#((E_{q+n^t} \setminus E_{q})\cap \Z^2) \leq C_1 ( ab n^{t} + a(q+n^t)^{1/2} + bn^{t/2}+1);
    \end{equation}
    \begin{equation}\label{eq:K4ab}
        \# K_4 \leq \#((E_{s} \setminus E_{s-n^{1/3}})\cap \Z^2)
        \leq C_4 ( ab n^{1/3} + a n^{1/2} + bn^{1/6}+1),
    \end{equation}
    with $C_1$ and $C_4$ again independent of $a, \, b, \, n$.

    Before returning to the estimate of $J_1$, we prefer to eliminate the dependence on $a$ and $b$ in the estimates of the cardinalities of the sets $K_j$. Recall that $0<\delta<\eta<1/3$, $0<t<1$, and 
    \begin{equation}\label{eq:ab_bounds_revisited}
        ab \leq n^{-2/3-\eta}, \quad a \leq \sqrt{ab} \leq n^{-1/3 - \eta/2}, \quad b = \frac{ab}{\min\{a,b\}} \leq \frac{n^{-2/3-\eta}}{n^{-1/2-\delta}} \leq n^{-1/6+\delta - \eta}.  
    \end{equation}
    Combining this with~\eqref{eq:K25ab},\eqref{eq:K1ab}, and \eqref{eq:K4ab}, we arrive at
     \begin{equation}\label{eq:K1245}
         \begin{split}
             \#K_j &\leq C_j(n^{1/3-\eta} +n^{1/3+\delta - \eta} +1)\\
             & \leq C_j n^{1/3+\delta-\eta}, \quad j=2,5;\\
             \#K_1 & \leq C_1(n^{t-2/3-\eta} + n^{t/2 - 1/3 - \eta/2}+n^{t/2-1/6+\delta-\eta}+1)\\
             & \leq C_1(n^{t/2-1/6+\delta-\eta}+1); \\
             \#K_4 & \leq C_4(n^{-1/3-\eta} + n^{1/6 - \eta/2}+n^{\delta-\eta}+1)\\
             & \leq C_4n^{1/6-\eta/2}
             .
         \end{split}
     \end{equation}
   
    Plugging these upper bounds into the estimate of $J_1$ obtained above, we conclude that
    \begin{align}
        &\frac{J_1}{C^*} = \quad \frac{1}{C^*} \sum_{(k,l)\in K_1\cup K_2\cup K_4\cup K_5}  |\mathcal{L}^{0}_n(X_{k,l})| e^{-\frac{X_{k,l}(a,b)}{2}}\\
        & \leq (n^{t/2-1/6+\delta-\eta}+1)  n^{-\frac{1}{6}-(\eta-\delta)}
        + n^{1/3+\delta-\eta} n^{(1-p-t)/2}
        + n^{1/6-\eta/2} n^{-1/3} + \ n^{1/3+\delta-\eta} n^{-1/3}\\
        & \leq n^{t/2-1/3}+n^{-(p+t)/2+5/6+(\delta-\eta)} + n^{-1/6-\eta/2} + n^{\delta-\eta},
    \end{align}
    where  $C^*$ is positive and does not depend on $a, \, b, \, n$. Recall that by choice of $p,t$ (see \eqref{eq:p_t_cond}), we have  $p+t>\tfrac{5}{3}-2(\eta-\delta)$ and $t<\tfrac{2}{3}$. Hence, each of the exponents of $n$ in the last estimate is negative. This implies that
    \begin{equation}
        \lim\limits_{n\to\infty} \sup\limits_{(a,b) \in S_n(\delta, \eta)}\sum_{(k,l)\in K_1\cup K_2\cup K_4\cup K_5}
        |\mathcal{L}^{0}_n(X_{k,l}(a,b))| e^{-\frac{X_{k,l}(a,b)}{2}} = 0.
    \end{equation}

    It remains to prove that the same holds for the sum over $K_3$. Let us rewrite this region as
    \begin{equation}
        K_3 = \{(k,l)\in K: n^{1/3} \leq s - X_{k,l} < n^{p}\}.
    \end{equation}
    We use a dyadic decomposition: define
    \begin{equation}
        M=\lceil \log_2 n^{p-1/3}\rceil , \qquad U_j:=2^j n^{1/3}, \,\,\ j=0,\dots,M,
    \end{equation}
    and split $K_3$ into 
    \begin{equation}
        K_3:=\bigcup\limits_{j=0}^{M-1} V_j := \bigcup\limits_{j=0}^{M-1} \left\{ (k,l) \in K_3: s-X_{k,l} \in [U_j, U_{j+1})  \right\}.
    \end{equation}
    For each layer $j$ and $(k,l)\in V_j$, for sufficiently large $n$, we have
    \begin{equation}
        X_{k,l}-q = s-q-(s-X_{k,l}(a,b)) \geq s-q-n^p \geq n+2, \quad s-X_{k,l}\geq U_j.
    \end{equation}
    Together with the first estimate in~\eqref{eq:estimate}, this gives us
    \begin{equation}\label{eq:Laguerre_on_dyadic_layer}
        |\mathcal{L}_n^{0}(X_{k,l}(a,b))|e^{-\frac{X_{k,l}(a,b)}{2}} \leq  \sqrt{\frac{s-q}{(s-X_{k,l})(X_{k,l}-q)}} \leq \frac{2}{\sqrt{U_j}}.
    \end{equation}

    Next, we estimate the number of lattice points in each layer $V_j$. We use Lemma~\ref{lem:ellipse_lattice_count} and inequality~\eqref{eq:ellipse_difference}:
    \begin{equation}
        \#V_j \leq \#((E_{s-U_j}\setminus E_{s-U_{j+1}})\cap \Z^2) \leq ab (U_{j+1}-U_{j}) + \widetilde{C} \left(a \sqrt{s-U_j} + b \sqrt{U_{j+1}-U_{j}} +1\right),
    \end{equation}
    where $j=0,\dots,M-1$ and $\widetilde{C}$ does not depend on $a, \, b, \, n, \, j$. Since $U_{j+1}-U_j = U_j = 2^j n^{1/3}$ we obtain, using~\eqref{eq:ab_bounds_revisited},
    \begin{equation}
        \#V_j \leq 2^j n^{-1/3-\eta} + \widetilde{C} (n^{1/6-\eta/2} + 2^{j/2} n^{\delta-\eta} +1).
    \end{equation}
    Combining this with~\eqref{eq:Laguerre_on_dyadic_layer}, we get
    \begin{align}
        \sum\limits_{(k,l)\in K_3} |\mathcal{L}_n^{0}(X_{k,l}(a,b))|e^{-\frac{X_{k,l}(a,b)}{2}}
        & = \sum\limits_{j=0}^{M-1} \sum\limits_{(k,l)\in V_j}
        |\mathcal{L}_n^{0}(X_{k,l}(a,b))|e^{-\frac{X_{k,l}(a,b)}{2}}\\
        & \leq \widetilde{C} \sum\limits_{j=0}^{M-1} \frac{\#V_j}{U_j^{1/2}}\\
        & \leq \widetilde{C} \sum\limits_{j=0}^{M-1}  \frac{2^j n^{-1/3-\eta} + n^{1/6-\eta/2} + 2^{j/2} n^{\delta-\eta}}{2^{j/2}n^{1/6}}. 
    \end{align}
    We treat the latter sums separately:
    \begin{equation}
        \Sigma_1 := \sum\limits_{j=0}^{M-1} 2^{j/2}n^{-1/2-\eta}
        \leq \frac{2^{M/2}}{\sqrt{2}-1} n^{-1/2-\eta}
        \leq 4n^{p/2-2/3-\eta},
    \end{equation}
    \begin{equation}
        \Sigma_2 := \sum\limits_{j=0}^{M-1} 2^{-j/2}n^{-\eta/2}
        \leq 4n^{-\eta/2},
    \end{equation}
    \begin{equation}
        \Sigma_3 := \sum\limits_{j=0}^{M-1} n^{-1/6+\delta-\eta}
        \leq M n^{-1/6+\delta-\eta}
         \leq C n^{-1/6+\delta-\eta} \log_2 n.
    \end{equation}
    Since $p<1$ and  $0 < \delta < \eta$, we see that
    \begin{equation}
        \lim_{n \to \infty} \Sigma_\ell = 0, \quad \ell = 1,2,3.
    \end{equation}
Thus, 
\begin{equation}
     \lim\limits_{n \to \infty} \sup\limits_{(a,b) \in S_n(\delta, \eta)} \sum\limits_{(k,l)\in K} |\mathcal{L}_n^{0}(X_{k,l}(a,b))|e^{-\frac{X_{k,l}(a,b)}{2}} \leq \lim_{n \to \infty} \sup\limits_{(a,b) \in S_n(\delta, \eta)} (J_1 + \Sigma_1 +\Sigma_2+ \Sigma_3) = 0.
\end{equation}
This finishes the proof of Proposition~\ref{pro:Janssen_main}.
\end{proof}

\subsection{Proof of Theorem~\ref{thm:Janssen_test}}

It is easy now to finish the proof of the main theorem of this section.
By Proposition~\ref{pro:Janssen}, to verify that for sufficiently large $n$ the Gabor system $\mathcal{G}_n(a,b)$ forms a frame in $L^2(\R)$ for every $(a,b) \in S_n(\delta,\eta)$, it suffices to show that there exists $N\in\N$ such that the inequality
\begin{equation}\label{eq:vhh2}
    \sup\limits_{(a,b)\in S_n(\delta, \eta)} \sum_{(k,l)\in \Z^2} \left|V_{h_n} h_n \left( \frac{k}{b},\frac{l}{a} \right) \right| < 2 \quad \text{holds true for every $n>N$.}
\end{equation}

Using the relation between the short-time Fourier transform of $h_n$ in terms of the Laguerre polynomials as we did in~\eqref{eq:Janssen_sum} and $\mathcal{L}^0_n(0)=1$, we get 
\begin{equation}
 \begin{split}
    &\sum_{(k,l)\in \Z^2} \left|\mathcal L^0_n\left(X_{k,l}(a,b)\right)\right|
    e^{-X_{k,l}(a,b)/2} \\
    &\leq
    1+   \sum_{\substack{(k,l)\in\Z^2\\X_{k,l}(a,b)\geq 6n}}
         \left|\mathcal L^0_n\left(X_{k,l}(a,b)\right)\right| e^{-X_{k,l}(a,b)/2}
         +         \sum_{\substack{(k,l)\in K}}
         \left|\mathcal L^0_n\left(X_{k,l}(a,b)\right)\right| e^{-X_{k,l}(a,b)/2},
         \end{split}
\end{equation}
where $K$ is defined in~\eqref{eq:K_set_def}.
By Propositions~\ref{pro:Janssen_tail} and~\ref{pro:Janssen_main}, the sums on the right-hand side in the last estimate tend to $0$ as $n \to \infty$ uniformly in $a,b$.
Thus, \eqref{eq:vhh2} follows, and the theorem is proved.

\section{Far from the diagonal}

\begin{theorem}\label{thm:Wirtinger_test}
    For every $\varepsilon>0$ and $0<\rho<\frac{1}{2}$, there exists $n_{\varepsilon, \rho} \in \N$ such that, whenever $n\geq n_{\varepsilon, \rho}$ and a pair of parameters $a,b>0$ satisfies
    \begin{equation}\label{eq:ab_far}
        \min\{a,b\} \leq n^{-\frac{1}{2}- \varepsilon}, \quad ab \leq \frac{1}{2}-\rho,
    \end{equation}
     the Gabor system $\mathcal{G}_n(a,b)$ forms a frame for $L^2(\R)$.
\end{theorem}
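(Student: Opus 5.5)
We use Corollary~\ref{cor:Wirtinger_rectangular}. Since $\widehat{h}_n=(-i)^n h_n$ and the frame set of $h_n$ is symmetric in $(a,b)$ (apply the Fourier transform), we may assume $b=\min\{a,b\}\le n^{-1/2-\varepsilon}$. Then $|\widehat{h}_n(b(x-k))|=|h_n(b(x-k))|$, and it suffices to show
\begin{equation}
\inf_{x\in[0,1]}\frac{\sum_{k}|h_n(b(x-k))|^2}{\sum_k (x-k)^2|h_n(b(x-k))|^2}\;\ge\;\frac{1}{(1-2\rho')^2}
\end{equation}
for some $\rho'<\rho$ and all large $n$, uniformly in $b\le n^{-1/2-\varepsilon}$. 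Then $\delta_{h_n}(b)\ge \frac12(1-2\rho')^{-1}>\frac12-\rho\ge ab$. Equivalently, with weights $w_k:=|h_n(b(x-k))|^2$, the weighted mean of $(x-k)^2$ must stay bounded as $n\to\infty$. The natural route is to show that the mass of $w_k$ sits at $k$ with $|x-k|\le 1$, since $|x-k|$ takes the values $x,1-x,1+x,\dots$.

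The key input is the behaviour of $h_n$ near the origin. By \eqref{eq:h_n-L_n}, $h_n(t)$ equals $C_ne^{-\pi t^2}\mathcal L^{\mp1/2}_{\lfloor n/2\rfloor}(2\pi t^2)$, times $t$ in the odd case. The points $t=b(x-k)$ with $|k|\lesssim n^{\varepsilon/2}$ satisfy $2\pi t^2\lesssim n^{-1}$, which is far below the first zero (of order $1/n$), at least after a mild choice of cutoff. This is the regime near the origin, where $h_n$ behaves like $\cos(2\sqrt{n}\,\pi t)$ or $\sin(2\sqrt{n}\,\pi t)$ (Plancherel--Rotach/Bessel asymptotics, $h_n(t)\approx c\,n^{-1/4}\cos(2\pi\sqrt{n}\,t-n\pi/2)$ uniformly for $|t|\le n^{-\varepsilon'}$). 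With $b\sqrt n\le n^{-\varepsilon}$, the function $k\mapsto h_n(b(x-k))$ varies very slowly. In the even case the values are $\approx c n^{-1/4}$ for all $|k|\le n^{\varepsilon/2}$, so on the face of it there are many comparable terms. I therefore need to reorganise the argument.

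Indeed, a slowly varying weight makes $\sum(x-k)^2w_k/\sum w_k$ large, not small, so the plan has to be the reverse. Since $b$ is small, the sum over $k$ is a Riemann sum at step $b$ of $|h_n|^2$. In the numerator it approximates $b^{-3}\int t^2|h_n(t)|^2\,dt=b^{-3}\frac{2n+1}{8\pi^2}$, and in the denominator $b^{-1}\int|h_n|^2=b^{-1}$. The ratio is then about $b^2\cdot 8\pi^2/(2n+1)\to\infty$ inverted, i.e.\ $\delta\approx \frac{\pi}{\sqrt{2n+1}\,b}\cdot\frac{1}{\sqrt2}$. This is large precisely when $b\ll n^{-1/2}$, and exceeds $\frac12$ once $b\le n^{-1/2-\varepsilon}$. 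So the actual plan is the following. (i) By Poisson summation, write both periodic sums through the samples at $\Z/b$ of the Fourier transforms of $|h_n|^2$ and $t^2|h_n|^2$; these are, up to a factor, $V_{h_n}h_n(0,\cdot)$ and its second derivative, i.e.\ Laguerre expressions $\mathcal L^0_n(\pi\xi^2)e^{-\pi\xi^2/2}$. (ii) Show that the nonzero frequencies $\xi=j/b$ with $|j|\ge1$ satisfy $\pi\xi^2\ge\pi n^{1+2\varepsilon}\gg\nu$, so by \eqref{eq:x_n_n_factorial_bound} and Lemma~\ref{lem:tail_estimate} (as in Proposition~\ref{pro:Janssen_tail}) their total contribution is superexponentially small relative to the main terms. (iii) Conclude that the ratio equals $\frac{8\pi^2}{(2n+1)b^2}(1+o(1))$ uniformly in $x$, hence $\delta_{h_n}(b)\to\infty$ uniformly, giving the claim with room to spare.

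The main obstacle is step (ii) for the second-moment sum. The derivative terms of $\mathcal L_n^0(\pi\xi^2)e^{-\pi\xi^2/2}$ carry polynomial factors in $\xi$ and $n$, which must be controlled by the same Stirling-type estimate with an extra power, uniformly in $x\in[0,1]$ (the phases $e^{2\pi i jx}$ are bounded by $1$). I would first try the crude bound: the $m$-th derivative is at most $(C\xi)^{2}\,x^n/n!$-type terms, absorbed via Lemma~\ref{lem:tail_estimate} with $r=n+2$.
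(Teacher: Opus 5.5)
Your argument breaks at the central computation. With $b\le n^{-1/2-\varepsilon}$ as the dilation parameter in $\delta_{h_n}(b)$, steps (i)--(ii) are fine, but what they actually yield is
\[
\frac{\sum_k|h_n(b(x-k))|^2}{\sum_k(x-k)^2|h_n(b(x-k))|^2}
=\frac{b^{-1}\,(1+o(1))}{b^{-3}\int_{\R}t^2|h_n(t)|^2\,dt\,(1+o(1))}
=\frac{4\pi b^2}{2n+1}\,(1+o(1))\longrightarrow 0 ,
\]
using $\int t^2|h_n|^2=\frac{2n+1}{4\pi}$. Hence $\delta_{h_n}(b)\approx\sqrt{\pi}\,b/\sqrt{2n+1}$, which tends to $0$. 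The ``inversion'' in step (iii) that gives $\delta\approx\frac{\pi}{\sqrt2\,\sqrt{2n+1}\,b}$ is wrong. Your first observation was the right one: for tiny $b$ the weights $|h_n(b(x-k))|^2$ spread over $|k|\lesssim\sqrt n/b$. So the weighted mean of $(x-k)^2$ is huge and the ratio is tiny. Consequently $ab<\delta_{h_n}(b)$ amounts to $a\lesssim\sqrt{\pi/(2n+1)}$, a condition on the parameter your normalization leaves free. For example, with $b=n^{-1/2-\varepsilon}$ and $a=(\frac12-\rho)n^{\frac12+\varepsilon}$ the test gives no information. Corollary~\ref{cor:Wirtinger_rectangular} is not symmetric in $(a,b)$. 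After dilation to $\mathcal{G}(\mathcal{D}_bh_n,ab\Z\times\Z)$ it roughly compares the time step $ab$ with $b$ divided by the frequency spread of $h_n$, so it is $a$ that has to be small.

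The repair is to normalize the other way, $a=\min\{a,b\}\le n^{-1/2-\varepsilon}$, and test $ab<\delta_{h_n}(b)$ with $b$ the \emph{larger} parameter, as the paper does. But then $b$ ranges over all of $[a,(\frac12-\rho)/a]$, and your Poisson step (ii) breaks down. It needs $\pi/b^2>\nu$, i.e.\ $b\lesssim n^{-1/2}$, which only covers $ab\lesssim n^{-1-\varepsilon}$, a range already inside $\mathbb{H}_{n+1}$. For $b\gtrsim\sqrt n$ the sums are dominated by a few terms and are not Riemann sums at all, and only there can $ab$ be of order one.

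The paper avoids asymptotics altogether. It writes the test as positivity of $\sum_k\bigl(1-4a^2b^2(x-k)^2\bigr)(x-k)^{2\tau}e^{-2\pi b^2(x-k)^2}|P_{m,\tau}(2\pi b^2(x-k)^2)|^2$. It keeps a single positive term $k_0$ with two properties:
\begin{itemize}
\item its Laguerre argument lies just beyond the largest zero, so $|P_{m,\tau}|\ge1$ by the product formula; this also sidesteps zeros such as $h_n(0)=0$ for odd $n$;
\item $1-4a^2b^2(x-k_0)^2\ge c_\rho$.
\end{itemize}
Every negative term has $|b(x-k)|>1/(2a)$, i.e.\ Laguerre argument $>\pi/(2a^2)\ge\frac{\pi}{2}(2m)^{1+2\varepsilon}$. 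That is deep in the superexponential tail, where \eqref{eq:x_n_n_factorial_bound} and Lemma~\ref{lem:tail_estimate} make their total negligible against the kept term, uniformly in $b\ge a$.
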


\begin{proof}
        From the symmetry of the frame sets of Hermite functions, we may assume without loss of generality that
    \begin{equation}\label{eq:ab_far_a}
        a=\min\{a,b\} \leq n^{-\frac{1}{2}-\varepsilon}, \quad ab \leq \frac{1}{2}-\rho.
    \end{equation}

    To prove that the point $(a,b)$ belongs to the frame set, we apply the Wirtinger test in its rectangular form to the Gabor system $\mathcal{G}_n(a,b)$. By Corollary \ref{cor:Wirtinger_rectangular}, it suffices to show that
\begin{equation}\label{eq:ab_bound_to_prove}
    ab < \delta_{h_n}(b)= \frac{1}{2} \inf_{x\in[0,1]}
    \sqrt{\frac{\displaystyle\sum_{k\in\mathbb Z}|\widehat h_n(b(x-k))|^2}{\displaystyle\sum_{k\in\mathbb Z}(x-k)^2|\widehat h_n(b(x-k))|^2}}.
\end{equation}
Since $|\widehat h_n|^2$ is an even function for every $n\in \N$, we can take the infimum only over the interval $[0,\frac{1}{2}]$.

To treat simultaneously the cases of even and odd $n$, we choose $\tau\in\{0,1\}$ and $m\in \N_0$ such that
\begin{equation}
n=2m+\tau.
\end{equation}
We also set 
\begin{equation}
     P_{m,\tau}(x):= {\mathcal{L}}^{-\frac{1}{2}+\tau}_{m}(x).
\end{equation}
From \eqref{eq:h_n-L_n}, we obtain
$$
h_{2m+\tau}(t) = C_{m,\tau} t^{\tau} e^{-\pi t^2} P_{m,\tau}(2\pi t^2),
$$
where $C_{m,\tau}$ is a constant depending on $\tau$ and $m$, and we use the standard convention that $t^{\tau} = 1$ for every $t\in \R$ and $\tau=0$.
Using this and the eigenfunction property of Hermite functions under the Fourier transform, we rewrite \eqref{eq:ab_bound_to_prove} in terms of Laguerre polynomials:
\begin{equation}\label{eq:wirtinger_applied}
    4a^2 b^2 < \inf_{x\in[0,\frac{1}{2}]}
    \frac{\displaystyle\sum_{k\in\mathbb Z} (x-k)^{2\tau} e^{-2\pi b^2(x-k)^2}|P_{m,\tau}(2\pi b^2(x-k)^2)|^2}{\displaystyle\sum_{k\in\mathbb Z}(x-k)^{2\tau+2} e^{-2\pi b^2(x-k)^2}|P_{m,\tau}(2\pi b^2(x-k)^2)|^2}.
\end{equation}
Denote by 
$$
Q_k(x) := (x-k)^{2\tau}e^{-2\pi b^2(x-k)^2}|P_{m,\tau}(2\pi b^2(x-k)^2)|^2 \left(1-4a^2b^2 (x-k)^2\right)
$$
and
$$
F(x):=\sum_{k\in\mathbb Z} Q_k(x).
$$
Rewriting \eqref{eq:wirtinger_applied} in the new terms, we see that to prove the theorem, it suffices to verify
\begin{equation}\label{eq:F_positive}
F(x)> 0    \quad \text{for every }\, x\in\left[0,\frac{1}{2}\right].
\end{equation}

Let us briefly explain the strategy for proving this estimate.
It is clear that the sign of each non-zero term $Q_k$ in the sum above is determined solely by the sign of the factor
$1-4a^2b^2(x-k)^2.$
We split the sum into its positive and negative parts and estimate the positive part from below and the absolute value of the negative part from above.
One obstacle for obtaining a lower bound for the positive part is that the value of the Laguerre polynomial may be arbitrarily small when its argument is close to one of its zeros. To avoid this issue, we retain only one positive term, corresponding to an index $k_0$ chosen so that the argument of $P_{m,\tau}$ lies to the right of its largest zero. In this region, the product representation of $L_m$ provides a suitable lower bound.

Now, we pass to the proof of \eqref{eq:F_positive} and divide the argument into four steps.

\textbf{Step 1.}
Choose an index
\begin{equation}\label{eq:k_0_choice}
    k_0:=
    \left\lceil
        \frac{1}{b}\sqrt{\frac{5m}{6}}+\frac{1}{2}
    \right\rceil. 
\end{equation}
For a fixed $x\in[0,\frac{1}{2}]$, denote by
$$
\mathcal{N}_x:=\left\{k\in\Z:\ |x-k|>\frac{1}{2ab}\right\}
$$
the set of indices corresponding to the negative terms in the sum \eqref{eq:F_positive}.
\begin{claim}\label{claim:k_0_N_X}
   For all sufficiently large $m\in \N$ we have $k_0\notin \mathcal{N}_x$. 
\end{claim}
\begin{proof}[Proof of the Claim \ref{claim:k_0_N_X}]
Recall that \eqref{eq:ab_far_a} asserts that $a \sqrt{m} \leq 2^{-\frac{1}{2}-\varepsilon} m^{-\varepsilon} \to 0$ and $ab\leq \frac{1}{2}-\rho$. 
The claim follows immediately for $k_0=1$. If $k_0\geq2$, then one can check that the definition of $k_0$ gives
$$
b<\frac{\sqrt{5m}}{\sqrt{6}(k_0-\frac{3}{2})}.
$$
Using this inequality, we get
\begin{equation}\label{eq:2abk_0}
2ab|x-k_0|
\leq 2ab k_0 \leq
2 a\sqrt{\frac{5m}{6}} \frac{k_0}{k_0-\frac{3}{2}} \leq 8 a\sqrt{m} \leq 8 m^{-\varepsilon} \to 0, \quad \text{as } m \to +\infty,
\end{equation}
which proves the claim.
\end{proof}

We set
\begin{equation}\label{eq:c_rho}
    c_{\rho}:=2\rho(1-\rho),
\end{equation}
\begin{equation}\label{eq:S_1_def}
    S_1(x):=c_{\rho} (x-k_0)^{2\tau} e^{-2\pi b^2(x-k_0)^2}
\left|P_{m,\tau}\left(2\pi b^2(x-k_0)^2\right)\right|^2,
\end{equation}
and
\begin{equation}
    S_2(x):= 4a^2b^2 \sum_{k\in\mathcal{N}_x}  (x-k)^{2\tau+2}e^{-2\pi b^2(x-k)^2} \left|P_{m, \tau}\left(2\pi b^2(x-k)^2\right)\right|^2.
\end{equation}

It is clear that all terms in the definition of $F(x)$ corresponding to the indices $k\notin\mathcal{N}_x$ are non-negative.

\begin{claim}\label{claim:k_0_claim}
    The term $Q_{k_0}$ is bounded from below by $S_1(x)$ for sufficiently large $m$.
\end{claim}
\begin{proof}[Proof of the Claim \ref{claim:k_0_claim}]
We need to show that
\begin{equation}\label{eq:ab_c_rho}
1-4a^2b^2 (x-k_0)^2 \geq c_{\rho}    
\end{equation}
provided that $m$ is chosen large enough.

For $k_0=1$ the estimate simply follows from \eqref{eq:ab_far_a}:
$$
1-4a^2b^2(x-k_0)^2 \geq 1 - 4 a^2 b^2 \geq 1-4\left(\frac{1}{2} - \rho\right)^2 = 2 c_{\rho}. 
$$
For $k_0 \geq 2$ we have already established \eqref{eq:2abk_0}.
With this estimate in hand, we obtain
$$
1-4a^2b^2(x-k_0)^2 \geq 1 - (2 a b |x-k_0|)^2 \geq 1-64m^{-2\varepsilon}. 
$$
Therefore, \eqref{eq:ab_c_rho} follows for sufficiently large $m$.
\end{proof}

It is obvious that $S_2$ majorizes the absolute value of the negative part of the sum in \eqref{eq:F_positive}.
Together with this observation, Claim \ref{claim:k_0_claim} implies that 
\begin{equation}\label{eq:F_S1_S2}
    F(x)\geq S_1(x)-S_2(x), \qquad\text{for } x\in\left[0,\frac{1}{2}\right].
\end{equation}

\textbf{Step 2.} We proceed with estimating $S_1$ from below. 
Denote by $x_1 < \dots < x_m$ the zeros of polynomial $P_{m,\tau}$.
Observe that for $\tau \in \{0,1\}$ we have  $x_j \in (0,4m+4)$ for every $j=1,\dots,m.$ 
Note that for every $x\in [0,\frac{1}{2}]$ and sufficiently large $m$, the argument of the Laguerre polynomial in $S_1$ is outside of the oscillatory region:
\begin{equation}
   2\pi b^2 (k_0-x)^2 \geq 2\pi b^2 \left(k_0- \frac{1}{2}\right)^2 \geq \frac{5\pi m}{3} \geq 5m + 5. 
\end{equation}
Therefore, the value of $P_{m,\tau}$ can be estimated from below  using the product formula:
\begin{equation}\label{eq:Lm_lower_bound}
    |P_{m,\tau}(2\pi b^2 (k_0-x)^2)|=\frac{1}{m!}\prod\limits_{j=1}^m(2\pi b^2 (k_0-x)^2 - x_j) \geq \frac{m^m}{m!}\geq 1.
\end{equation}

Below, we treat the cases $k_0 = 1$ and $k_0 \geq 2$ separately. 

If $k_0 = 1$, then 
$2\pi b^2 (k_0-x)^2  \leq 2\pi b^2, \, (x-k_0)^{2\tau} \geq \frac{1}{4}$ for $x\in[0,\frac{1}{2}]$.
Together with \eqref{eq:Lm_lower_bound} this gives us 
\begin{equation}
    S_1(x) \geq \frac{c_{\rho}}{4}  e^{-2\pi b^2}.
\end{equation}
Now we deal with $k_0\geq 2$. Clearly, we still have $(x-k_0)^{2\tau}\geq\frac{1}{4}$ for $x\in[0,\frac{1}{2}]$. Furthermore, in this case, from \eqref{eq:k_0_choice} we have $b < 2\sqrt{\frac{5m}{6}}.$
Using this bound, for $x\in[0,\frac{1}{2}]$ we get
\begin{equation}\label{eq:k_0_upper_bound}
   2\pi b^2 (k_0-x)^2 \leq 2\pi b^2 k_0^2 \leq 2\pi b^2 \left(\frac{\sqrt{5m}}{b\sqrt{6}}+\frac{3}{2}\right)^2\leq \pi (5m+9 b^2) \leq 35\pi m, 
\end{equation}
which leads to $S_1(x) \geq \frac{c_{\rho}}{4} e^{-35 \pi m}.$

Putting both estimates together, we conclude that for sufficiently large $m$ the inequalities
\begin{equation}\label{eq:S_1_lower_bound}
    S_1(x)\geq
    \begin{cases}
         \frac{c_{\rho}}{4}\,e^{-2\pi b^2},
        & \text{if } k_0=1,\\[1ex]
         \frac{c_{\rho}}{4}\,e^{-35\pi m},
        & \text{if } k_0\geq2,
    \end{cases}
\end{equation}
hold true.

\textbf{Step 3.} In this step, we find an upper bound for $S_2$. 
We first verify that the arguments of Laguerre polynomials lie to the right of the oscillatory region. Since $k\in \mathcal{N}_x$, we have $|x-k|>\frac{1}{2ab}$, and \eqref{eq:ab_far_a} implies $a^2 \leq (2m)^{-1-2\varepsilon}$. Thus, for sufficiently large $m$ and every $x\in[0,\frac{1}{2}]$ we have
$$
2\pi b^2(x-k)^2 > \frac{\pi}{2a^2}\geq\frac{\pi}{2} (2m)^{1+2\varepsilon}>4m+4.
$$
This allows us to apply \eqref{eq:x_n_n_factorial_bound} and estimate the Laguerre polynomials by
$$
\left|P_{m,\tau}\left(2\pi b^2(x-k)^2\right)\right| \leq \frac{(2\pi)^m b^{2m}(x-k)^{2m}}{m!}.
$$
Therefore,
\begin{equation}\label{eq:S2_bound_1}
    S_2(x)\leq 4a^2b^2 \sum\limits_{k\in \mathcal{N}_x} \frac{(2\pi)^{2m} b^{4m}(x-k)^{4m+2\tau+2} e^{-2\pi b^2(x-k)^2}}{(m!)^2}.
\end{equation}
To find an upper bound for
$$
J:=\sum\limits_{k\in \mathcal{N}_x} (x-k)^{4m+2\tau+2} e^{-2\pi b^2(x-k)^2},
$$
our plan is to apply Lemma \ref{lem:tail_estimate} with $ r:=4m+2\tau+2,  M:=\frac{1}{2ab}$ and $\Phi(t):=2\pi b^2 t^2$. In order to do this, we need to verify that for sufficiently large $m$ the inequality
$$
\gamma := 2ab(4m+2\tau+2) -2\pi b^2\left(\frac{1}{ab}\right)<0
$$
holds true. Indeed, from \eqref{eq:ab_far_a}, we get $a^2 m \leq m^{-2\varepsilon}$ and $b/a \geq 1$, therefore 
$$
\gamma = \frac{b}{a} \left(2a^2 (4m+2\tau+2) - 2\pi\right) \leq 14 m^{-2\varepsilon} - 2\pi< -1
$$
provided that $m$ is chosen large enough.
The set $\mathcal{N}_x$ is the union of two shifted one-sided arithmetic progressions. Applying Lemma \ref{lem:tail_estimate} to each of them gives 
$$
J\leq \frac{2 e^{-\frac{\pi}{2a^2}}}{(2ab)^{4m+2\tau+2}(1-e^{\gamma})} \leq \frac{e^{-\frac{\pi}{2a^2}}}{2^{4m}(ab)^{4m+2\tau+2}}.
$$
Plugging this inequality into \eqref{eq:S2_bound_1}, we get the following upper bound for $S_2(x)$ for $x \in [0,\frac{1}{2}]$ and sufficiently large $m$:
\begin{equation}\label{eq:s2_bound_prelim}
S_2(x) \leq a^2 \pi^{2m} \frac{2^{2m+2}b^{4m+2}J}{(m!)^2}  \leq \frac{4}{(m!)^2} \left(\frac{\pi}{2a^2}\right)^{2m} (ab)^{-2\tau} e^{-\frac{\pi}{2a^2}} \leq  \left(\frac{\pi}{2a^2}\right)^{2m+2} e^{-\frac{\pi}{2a^2}}.  
\end{equation}
In the latter inequality, we used $(ab)^{-2\tau} \leq a^{-4\tau} \leq a^{-4}$ which holds since $b\geq a.$ 

Next, we split the exponential factor as 
$$e^{-\frac{\pi}{2a^2}}=e^{-\frac{\pi}{2a^2}(1-c_{\rho})}e^{-\frac{\pi}{2a^2}c_{\rho}},$$
where $c_{\rho}$ was defined in \eqref{eq:c_rho}.
\begin{claim}
    For sufficiently large $m$ the inequality
    \begin{equation}\label{eq:psi_m_negative}
    \left(\frac{\pi}{2a^2}\right)^{2m+2} e^{-\frac{\pi}{2a^2}c_{\rho}} \leq 1
\end{equation} is true.
\end{claim}
\begin{proof}
Note that the function $$\psi(t):=(2m+2)\log t- c_{\rho}t$$ is decreasing after the point $t_0=\frac{2m+2}{c_{\rho}}$. Furthermore, for sufficiently large $m$, we have $m^{1+2\varepsilon}>t_0$ and
$$
\psi(m^{1+2\varepsilon}) = (2m+2)(1+2\varepsilon) \log m - c_{\rho}m^{1+2\varepsilon} < 0, 
$$
whence $\psi$ is negative on $[m^{1+2\varepsilon},+\infty).$
Since $\frac{\pi}{2a^2} \geq \frac{\pi}{2}(2m)^{1+2\varepsilon}$, we obtain \eqref{eq:psi_m_negative}.   
\end{proof}

Combining \eqref{eq:s2_bound_prelim} and \eqref{eq:psi_m_negative}, we arrive at
\begin{equation}\label{eq:S2_final_estimate}
  S_2(x) \leq e^{-\frac{\pi}{2a^2}(1-c_{\rho})}  
\end{equation}
for sufficiently large $m$.

\textbf{Step 4.} In this step, we finish the proof of \eqref{eq:F_positive}. 
First, we deal with the case $k_0 = 1$. Then, the relations \eqref{eq:F_S1_S2}, \eqref{eq:S_1_lower_bound}, and \eqref{eq:S2_final_estimate} imply
$$
F(x) \geq S_1(x) - S_2(x) \geq \frac{c_{\rho}}{4}e^{-2\pi b^2}-e^{-\frac{\pi}{2a^2}(1-c_{\rho})}
$$
for every $x\in[0,\frac{1}{2}]$. From \eqref{eq:ab_far_a}, we get $4a^2b^2 \leq 1-2c_{\rho}.$
Hence, the difference  
$$
\frac{\pi}{2a^2}(1-c_\rho) - 2\pi b^2 = \frac{\pi}{2a^2}(1-c_\rho - 4 a^2 b^2) \geq  \frac{\pi c_{\rho}}{2a^2}
$$
tends to infinity as $m\to\infty$. This implies that $F$ is positive on $[0,\frac{1}{2}]$ for sufficiently large $m$.

Second, we assume $k_0\ge 2$. Then, applying again \eqref{eq:F_S1_S2}, \eqref{eq:S_1_lower_bound}, and \eqref{eq:S2_final_estimate}, we arrive at
$$
F(x) \geq S_1(x) - S_2(x) \geq \frac{c_{\rho}}{4}e^{-35\pi m}-e^{-\frac{\pi}{2a^2}(1-c_{\rho})}.
$$
Since $a^{-2}\geq (2m)^{1+2\varepsilon}$, the difference $\frac{\pi}{2a^2}(1-c_{\rho})-35\pi m \to +\infty$ as $m\to+\infty.$ Thus,  we have $F(x)> 0$ uniformly for $x\in[0,\frac{1}{2}]$ when $m$ is sufficiently large.

This finishes the proof of the theorem.
\end{proof}

\section{Proof of the Main Theorem}

The main theorem now easily follows from the results obtained in Secs.~3 and~4.

\begin{proof}[Proof of Theorem~\ref{thm:main}]
Fix $\eta_0,\delta_0>0$ and $\rho_0\in(0,\frac12).$
We start by showing that for sufficiently large $n$ the set
\begin{equation}
\mathcal{A}:=\left\{(a,b)\in\R_+^2:ab\leq n^{-\frac23-\eta_0} \right\}
\end{equation}
is contained in $\mathcal{F}(h_n).$
Put
$\eta:=\min\{\eta_0,\tfrac14\}, \,\delta:=\frac{\eta}{2}.
$
Note that $0<\delta<\eta<\frac{1}{3}$ and we can apply Theorem~\ref{thm:Janssen_test}. Since
\begin{equation}
   \mathcal{A}_1:=\left\{(a,b)\in\R_+^2:
ab\leq n^{-\frac23-\eta_0},\ 
\min\{a,b\}\geq n^{-\frac12-\delta}
\right\}
\subset S_n(\delta,\eta) ,
\end{equation}
we see that 
for all sufficiently large $n$ the region $\mathcal{A}_1$ 
is contained in the frame set of $h_n$.

On the other hand, since
$n^{-\frac23-\eta_0}\leq \frac12-\rho_0$
for all sufficiently large $n$, Theorem~\ref{thm:Wirtinger_test}, applied with
$\varepsilon=\delta$ and $\rho=\rho_0$, shows that
\begin{equation}
\mathcal{A}_2:=\left\{ (a,b)\in\R_+^2: ab\leq n^{-\frac{2}{3}-\eta_0},\  \min\{a,b\}< n^{-\frac{1}{2}-\delta} \right\}    
\end{equation}
is also contained in $\mathcal{F}(h_n)$.
Combining these two inclusions and using $\mathcal{A}=\mathcal{A}_1 \cup \mathcal{A}_2$, we obtain that $\mathcal{A}\subset\mathcal{F}(h_n).$

Finally, again applying Theorem~\ref{thm:Wirtinger_test} with
$\varepsilon=\delta_0$ and $\rho=\rho_0$, we deduce that
\begin{equation}
    \left\{
(a,b)\in\R_+^2:
ab\leq\frac12-\rho_0,\ 
\min\{a,b\}\leq n^{-\frac12-\delta_0}
\right\}
\end{equation}
is contained in the frame set for all sufficiently large $n$. Taking $n$ sufficiently large for all the above inclusions, we finish the proof of the main theorem.
\end{proof}

\section{Acknowledgments}
M.~Faulhuber was supported by the Austrian Science Fund (FWF) [\href{https://doi.org/10.55776/PAT5102224}{10.55776/PAT5102224}]. I.~Zlotnikov was supported by Grant 334466 of the Research Council of Norway and by ESI Research in Teams Grant "Gabor frames in higher dimensions".
I.~Shafkulovska was funded in part or in whole by the Austrian Science Fund (FWF)  [\href{https://doi.org/10.55776/Y1199}{10.55776/Y1199}].
For open access purposes, the authors have applied a CC BY public copyright license to any author accepted manuscript version arising from this submission.

\end{document}